\newcommand{\al}{\alpha}
\newcommand{\la}{\lambda}
\newcommand{\de}{\delta}
\newcommand{\eps}{\varepsilon}
\newcommand{\bx}{\bar x}
\newcommand {\R} {\mathbb R}
\newcommand {\N} {\mathbb N}
\newcommand {\B} {\mathbb B}
\newcommand {\dom} {{\rm dom}\,}
\newcommand {\epi} {{\rm epi}\,}
\newcommand {\cl} {{\rm cl}\,}
\newcommand {\co} {{\rm co}\,}
\newcommand {\cone} {{\rm cone}\,}
\newcommand {\sd} {\partial}
\newcommand {\Int} {{\rm int}\,}
\newcommand{\vertiii}[1]{\left\vert\kern-0.25ex\left\vert\kern-0.25ex\left\vert #1\right\vert\kern-0.25ex\right\vert\kern-0.25ex\right\vert}
\newcommand{\vertiiiBig}[1]{\Big\vert\kern-0.25ex\Big\vert\kern-0.25ex\Big\vert #1\Big\vert\kern-0.25ex\Big\vert\kern-0.25ex\Big\vert}
\def\nbh{neighbourhood}
\def\es{\emptyset}
\def\lsc{lower semicontinuous}
\def\RHS{right-hand side}
\def\Fr{Fr\'echet}
\newcommand{\norm}[1]{\left\Vert#1\right\Vert}
\newcommand {\diam} {{\rm diam}\,}
\newcommand{\blue}[1]{\textcolor{blue}{#1}}
\newcommand{\red}[1]{\textcolor{red}{#1}}
\newcommand{\olive}[1]{\textcolor{olive}{#1}}
\newcommand{\qdtx}[1]{\quad\mbox{#1}\quad}
\newcounter{mycount}
\newcommand{\AK}[1]{\todo[inline]{AK {#1}}}
\newcommand{\AH}[1]{\todo[inline,color=green!40]{AH {#1}}}
\setlist[enumerate,1]{label={\rm(\roman*)}}
\setlist[enumerate,2]{label={\rm(\alph*)}}
\newcommand{\PM}[1]{\todo[inline,color=blue!40]{Reviewer: {#1}}}
\newcommand{\ML}[1]{\todo[inline,color=blue!40]{Marco: {#1}}}
\newcommand{\sdf}{\partial}
\begin{document}


\title{
Strong duality in infinite convex optimization}

\author{Abderrahim Hantoute
and Alexander Y. Kruger
and Marco A. L\'opez}

\institute{
Abderrahim Hantoute \at
Department of Mathematics, University of Alicante, Spain\\
\email{hantoute@ua.es}, ORCID: 0000-0002-7347-048X
\and
Alexander Y. Kruger (corresponding author) \at
Optimization Research Group,
Faculty of Mathematics and Statistics,
Ton Duc Thang University, Ho Chi Minh City, Vietnam\\ \email{alexanderkruger@tdtu.edu.vn},
ORCID: 0000-0002-7861-7380
\and
Marco A. L\'opez \at
Department of Mathematics, University of Alicante, Spain\\
\email{marco.antonio@ua.es},
ORCID: 0000-0002-0619-9618}
\maketitle

\date{}

\if{\begin{abstract}
This paper deals with a general convex optimization problems with
infinitely many constraints, with Lagrangian duals involving infinitely many
dual variables and infinite sums of functions. Two different dual problems
are introduced with the dual spaces being $\ell ^{1}$ and $\ell
^{\infty }$, respectively. We prove that the Slater condition guarantees the
strong duality, in contrast with the Haar duality, where the Slater condition
does not ensure strong duality even in the case of semi-infinite linear
optimization. We apply extended concepts of uniform lower
semicontinuity of infinite
collections of functions, and establish general duality results via fuzzy multiplier
rules for infinite sums.
\end{abstract}
\fi
\begin{abstract}
We develop a methodology for closing
duality gap and
guaranteeing strong duality in infinite convex optimization. Specifically, we examine two new Lagrangian-type dual formulations involving infinitely many
dual variables and infinite sums of functions. Unlike the classical Haar duality scheme, these
dual problems provide
zero duality gap and are solvable under the standard Slater condition. Then we derive general
optimality conditions/multiplier
rules by applying
{subdifferential}
rules for infinite sums established in \cite{HanKruLop1}.
\end{abstract}

\keywords{Convex duality \and zero duality gap and strong duality  \and optimality conditions \and infinite sum}

\subclass{49J52 \and 49J53 \and 49K40 \and 90C30 \and 90C34 \and 90C46}


\section{Introduction}\label{sec:introduction}

We consider
{the}
\emph{infinite} convex optimization problem
\begin{equation}
\tag{\rm P}
\label{problemp}
\inf_{f_{t}(x)\leq 0\; (t\in T)}\ f_0(x),
\end{equation}
where $T$ is an arbitrary nonempty set with $0\notin T$,
and $f_{t}:X\rightarrow \mathbb{R}_{\infty}:=\R\cup\{+\infty\}$ $(t\in T\cup\{0\})$ are proper convex functions on a
{Banach} space $X$.

If $\dim X<+\infty$, and the functions $f_{t}$ $(t\in T\cup\{0\})$ are linear,
this is the linear \emph{semi-infinite} optimization problem.
Such problems and their duals have important applications in functional approximation, engineering, economy, etc.; see, e.g., \cite{GobLop98} and the references therein.




Dual problems can be associated with \eqref{problemp}
in the context of the
duality theory developed by Duffin \cite{Duf56}, Kretschmer \cite{Kre61} and Borwein \cite{Bor83}.
The reader can find a detailed account of this theory for the linear model in \cite[Chapter~3]{AndNas87}.
Assuming that $T$ is finite, i.e., $T:=\{1,\ldots,k\}$ for some $k\in\N$, the
classical Lagrangian framework yields a natural candidate for the dual problem:
\begin{equation*}
\sup_{\lambda\in \mathbb{R}_+^k}\ \inf_{x\in X}\ L(x,\lambda),
\end{equation*}
where
$$
L(x,\lambda):=f_{0}(x)+\sum_{i=1}^{k}\lambda _{i}f_{i}(x)\quad (\lambda\in \mathbb{R}_+^k,\ x\in X)
$$
with the convention $0\cdot(+\infty)=+\infty$.
Under mild qualification conditions (typically, the \emph{Slater} condition), the dual problem attains its maximum and exhibits zero duality gap with the primal problem \eqref{problemp}, i.e., we have strong duality (see, e.g., \cite[Chapter~4]{CorHanLop23} and \cite[Chapter~2]{Zal02}).
This result also extends to {some} compact continuous semi-infinite optimization problems, specifically when $X=\mathbb{R}^{n},$ the index set $T$ is compact and the
mappings $t\mapsto f_{t}(x)$ $(x\in X)$ are upper semi-continuous.
For a discussion of this class of problems we refer the readers to \cite{GobVol22,DinGobLopVol23}.
More precisely, in such a framework strong duality also holds provided the so-called \emph{Haar dual} is used \cite[Chapter~8]{GobLop98}:
\begin{equation*}
\sup_{\lambda \in \mathbb{R}_+^{(T)}}\inf_{x\in X}\ L(x,\lambda),
\end{equation*}
where $\mathbb{R}_+^{(T)}:=\{\lambda :T\rightarrow \mathbb{R}_+: \lambda (t)=0$
for all but finitely many $t\in T\}$, and $L$ is given by
\begin{gather}
\label{L}
L(x,\lambda)
:=f_{0}(x)+\sum_{t\in T}\lambda _{t}f_{t}(x)\quad (\lambda\in \mathbb{R}_+^{(T)},\ x\in X).
\end{gather}
Hence, $\inf_{x\in X}L(x,\lambda)
=\inf_{x\in X}(f_{0}(x)+\sum_{t\in T,\ \lambda_t>0}\lambda _{t}f_{t}(x)),$ where only finite sums are involved.
However, the Haar dual doest not provide strong duality for general infinite optimization problems, which may not be semi-infinite or compact continuous. This is illustrated by the following classical example due to Karney \cite{Kar83}.
\begin{example}
\label{E1.1}
Let \eqref{problemp} be the following linear semi-infinite optimization problem
in $\mathbb{R}^{2}:$
\begin{equation*}
\inf_{\substack{-u_{2}-1\leq 0,\;
u_{1}\leq 0,\\
\frac{1}{k}u_{1}-u_{2}\leq 0\;(k=3,4,\ldots)}}\ u_2,
\end{equation*}
{Thus, $T=\N$.}
The solution set of this problem is obviously $\R_-\times \{0\}$, and the optimal value is zero.
With $x=(u_1,u_2)\in\R^2$, set $f_0(x):=u_2$, $f_1(x):=-u_2-1$, $f_2(x):=u_1$,
and $f_k(x):=\frac{1}{k}u_{1}-u_{2}$ $(k=3,4,\ldots)$.
Observe that the functions $f_k$ $(k=0,1,\ldots)$ are finite, convex and continuous.
For all $x\in\R^2$ and $\la:=(\la_1,\la_2,\ldots)\in\R^{(\N)}$,
{in view of \eqref{L},} we have
\begin{align}
\notag
L(x,\lambda)=& u_2+\la_1(-u_2-1)+\la_2u_1+\sum_{k=3}^{+\infty} \lambda _{k}\Big(\frac{1}{k}u_{1}-u_{2}\Big)
\\
\label{E1.1-2}
=&\Big(\la_2+\sum_{k=3}^{+\infty} \frac{\lambda_{k}}{k}\Big)u_1 +\Big(1-\la_1-\sum_{k=3}^{+\infty}\lambda_{k}\Big)u_2-\la_1.
\end{align}
{It is easy to check that $\inf_{x\in X} L(x,\la)=-1$ if $\la_1=1$ and $\la_2=\la_3=
\ldots=0$, and $\inf_{x\in X}L(x,\la)=-\infty$ otherwise.}
In other words, the optimal value of the Haar dual is $-1$ and we have a (non-zero) duality gap between problem \eqref{problemp} and its Haar dual, even though the Slater condition is satisfied. 
\sloppy
\end{example}

The above example shows that the framework of the Haar duality is not sufficient to
guarantee strong duality. Several solutions have been proposed to remedy this deficiency of the Haar duality. An approach based on the so-called \emph{Farkas-Minkowski property} is investigated in \cite{GobLop98} (see also \cite[Theorem~8.2.12 and Corollary~8.2.13]{CorHanLop23}).
Instead, the approach adopted in this paper
{relies on weaker}
assumptions (such as the Slater condition) and focuses on broader Lagrangian-type dual formulations. 
To this aim, instead of considering finite sums as in the
{conventional}
duality schemes, we appeal to the concept of infinite (upper) sum defined for any
collection of functions $f_t:X\to \R_\infty$ $(t\in T)$ as (see \cite{HanKruLop1}):
\begin{equation}
\label{f}
\Big(\overline{\sum_{t\in T}}f_t\Big)(x):=
\limsup_{S\uparrow T,\;|S|<\infty}\; \sum_{t\in S}f_t(x) \quad (x\in X),
\end{equation}
where $\limsup$ is the usual upper limit over the family of all finite subsets
of $T$ (directed by ascending inclusions).
{Two new duals are proposed.}

The first one is constructed entirely from the original data of \eqref{problemp}:
\begin{equation}
\label{LagH}
\sup_{\lambda \in \mathbb{R}^T_+}\ \inf_{x\in X}\ L(x,\lambda),
\end{equation}
where $\mathbb{R}_+^T:=\{\lambda :T\rightarrow \mathbb{R}_+\}$ and
\begin{equation}
\label{L+}
L(x,\lambda):=f_{0}(x)+{\overline{\sum_{t\in T}}}\lambda _{t}f_{t}^{+}(x) \quad (\lambda\in \mathbb{R}_+^T,\ x\in X),
\end{equation}
with
$\overline{\sum}_{t\in T}\lambda _{t}f_{t}^{+}$ being the infinite sum of the positive parts of the $f_{t}^+$'s, following \eqref{f} (equivalently, because
{the summands are}
nonnegative,
$\overline{\sum}_{t\in T}\lambda _{t}f_{t}^{+}=\sup_{S\subset T,\;|S|<\infty}\sum_{t\in S}\lambda _{t}f_{t}^{+}$).
Example~\ref{E1.1} also indicates the need for considering the positive part functions rather than the original $f_t$'s.
In fact, employing in \eqref{LagH}
{a} Lagrangian of the form
{\eqref{L+} but with the infinite sum of the $f_t$'s instead of $f_t^+$,}
entails the same duality gap as the Haar dual.

{The second
proposed Lagrangian, given by
$$
L(x,\lambda)
:=f_{0}(x)+{\overline{\sum_{t\in T}}}\lambda _{t}f_{t}(x)+\lambda_\infty f_\infty(x),
$$
involves the infinite sum of the $f_t$'s together with a limit function $f_\infty$ defined in Section~\ref{S5}.
This reflects the
role of compactification arguments
used in our approach.
Although the optimal value of the second dual is
in general not larger than that of the first, we show that these
dual problems are more appropriate for achieving zero duality gap and strong duality properties under the same Slater condition used in the conventional (finite) setting. We also derive general
optimality conditions (multiplier
rules) for problem \eqref{problemp} by using a subdifferential rule from \cite{HanKruLop1}.
We mainly focus on the countable case ($T$ countable).
Our models can be reduced to this setting in a sufficiently general context.




\paragraph{Notation and preliminaries}

Our basic notation is standard; cf., e.g.,
\cite{CorHanLop23, Zal02}.
Throughout the paper, $X$ is a Banach space, $X^*$ is its topological dual, and $\langle\cdot,\cdot\rangle$ is the bilinear form defining the pairing between the two spaces.
We use the same notations
$d(\cdot,\cdot)$ and $\norm{\cdot}$
for distances
(including point-to-set distances)
and norms in all spaces.
Symbols $\R$, $\R_+$ and $\N$ represent the sets of all real numbers, all nonnegative real numbers and all positive integers, respectively,
{and we denote $\R_{\infty}:=\R\cup\{+\infty\}$}.
We make use of the conventions $0\cdot(+\infty)=+\infty$,
$(+\infty)-(+\infty)=+\infty$, $\inf\es_{\R}=+\infty$ and
$\sup\es_{\R_+}=0$, where $\es$ (possibly with a subscript) denotes the empty subset (of a given set).

By $\B$ and $\overline\B$, we denote the \emph{open} and \emph{closed} unit balls in the underlying space, while $B_\de(x)$ and $\overline B_\de(x)$ are the
\emph{open} and \emph{closed}
balls with radius $\de>0$ and centre $x$, respectively.
We write $\B^*$
to denote the \emph{open}
unit ball in the dual to a normed space.
For a subset $U\subset X$,
{its interior and closure are denoted, respectively, by $\Int U$ and $\cl U$, and}
$\diam U:=\sup_{x_1,x_2\in U}d(x_1,x_2)$
{denotes its}
diameter.
{The indicator function of $U$ is defined by $i_U(x)=0$ if $x\in U$ and $i_U(x)=+\infty$ if $x\notin U$.}

For an extended-real-valued function $f\colon X\to\R_{\infty}$,
its domain and epigraph are defined by
$\dom f:=\{x \in X\mid {f(x) < +\infty}\}$
and $\epi f:=\{(x,\mu) \in X\times \R: \mid {f(x) \le  \mu}\}$, respectively. The function $f$ is lower semicontinuous if  $\epi f$ is closed, and convex if $\epi f$ is convex.
The \emph{(Fenchel) subdifferential} of $f$ at $\bar x\in\dom f$ is the (possibly empty) convex set
\begin{gather*}
\sdf f(\bar x):=
\{x^*\in X^*\mid
f(x)-f(\bar x)-\langle x^*,x-\bar x\rangle\geq0
\qdtx{for all}
x\in X\},
\end{gather*}
while, for $\eps\ge0$, the set
\begin{gather*}
\sdf_\eps f(\bar x):=
\{x^*\in X^*\mid
f(x)-f(\bar x)-\langle x^*,x-\bar x\rangle+\eps\geq0
\qdtx{for all}
x\in X\}
\end{gather*}
is referred to as the \emph{$\eps$-subdifferential} of $f$ at $\bar x$.

Given an arbitrary nonempty index set $T$, which can be infinite and even uncountable, we consider the family of all its finite subsets:
\begin{gather}
\label{F(T)}
\mathcal{F}(T):=\{S\subset T\mid {|S|<\infty}\},
\end{gather}
where $|S|$ denotes the \emph{cardinality} (number of elements) of a set $S$. The latter is a \emph{directed set} when endowed with the partial order determined by ascending inclusions.
The upper and lower limits of a net
(of extended real numbers) $\{\alpha_{S}\}_{S\in\mathcal{F}(T)} \subset[-\infty,+\infty]$ are defined, respectively, as usual:
\begin{align}
\label{al}
\limsup_{S\uparrow T,\;|S|<\infty}\al_S
:=\inf_{S_0\in\mathcal{F}(T)} \sup_{S\in\mathcal{F}(T),\;S_0\subset S}\al_S,\quad
\liminf_{S\uparrow T,\;|S|<\infty}\al_S
:=\sup_{S_0\in\mathcal{F}(T)} \inf_{\substack{S\in\mathcal{F}(T),\;S_0\subset S}}\al_S.
\end{align}
If these two limits are equal, the common value, written $\lim_{S\uparrow T}\al_S$, is called the limit of $\{\alpha_{S}\}_{S\in\mathcal{F}(T)}$.
In particular, if $T$ is finite, then $\lim_{S\uparrow T}\al_S=\al_T$. The limit also exists if the mapping $\mathcal{F}(T)\ni S\mapsto\alpha_{S}$
is non-decreasing, i.e., $\alpha_{S_1}\le\alpha_{S_2}$ for all $S_1,S_2\in\mathcal{F}(T)$ with $S_1\subset S_2$.
Since the mappings $S_0\mapsto\sup_{S\in\mathcal{F}(T),\;S\supset S_0}\al_S$ and $S_0\mapsto\inf_{S\in\mathcal{F}(T),\;S\supset S_0}\al_S$ are non-increasing and non-decreasing, respectively, the upper and lower limits above are well defined, and
$$\limsup_{S\uparrow T,\;|S|<\infty}\al_S\ge\liminf_{S\uparrow T,\;|S|<\infty}\al_S.
$$
Given a sequence of real numbers $t_1,t_2,\ldots$, the definitions above reduce to the conventional upper and lower limits, respectively, if we set $\al_S:=t_{\max S}$ for any finite subset $S\subset\N$.

{In Section \ref{S2}, we formulate general subdifferential multiplier rules involving infinite families of convex functions.
Two dual problems to the infinite convex optimization problem~(\ref{problemp}) briefly discussed above are investigated in Section \ref{S5}.
Under the classical Slater condition,
both dual problems
{exhibit} strong duality, ensuring zero duality gap and {the existence of} dual solutions.
In Section \ref{S4}, we derive general
optimality conditions/multiplier
rules for the infinite convex optimization problem~\eqref{problemp}.
Some conclusions are presented in Section \ref{sec:conclusions}.
}

\section{Infinite sums}
\label{S2}

{In this section, we discuss the uniform lower semicontinuity property of the infinite sum function from \cite{HanKruLop1} and formulate general subdifferential multiplier rules.}

Given
{a subset $U\subset X$, a nonempty index set $T$, and}
a collection of extended-real-valued functions $\{f_{t}\}_{t\in T}$
{on $X$},
we consider their (upper) sum \eqref{f}
and
{its}
\emph{uniform infimum}
{on $U$}:
\begin{equation}\label{La0}
{\Lambda}_U(\{f_{t}\}_{t\in T}):=
\liminf_{S\uparrow T,\;|S|<\infty}\;
\liminf\limits_{\substack{\diam\{x_t\}_{t\in S}\to0\\x_t\in U\;(t\in S)}}\; \sum_{t\in S}f_{t}(x_{t}),
\end{equation}
where the $\limsup$ and $\liminf$ operations
over the directed set \eqref{F(T)}
{are defined by \eqref{al}}.
The following result, which shows the preservation of convexity by infinite sums, can be easily verified.

\begin{proposition}
The sum function \eqref{f} is convex whenever all the $f_t$'s are convex.
\end{proposition}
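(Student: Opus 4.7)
My plan is to decompose the definition of the infinite sum into three nested operations (finite sum, supremum over a filter, infimum over the filter base) and check that convexity is preserved at each stage, exploiting the directed structure of $\mathcal{F}(T)$ at the final stage.

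First, for each fixed $S\in\mathcal{F}(T)$, the function $g_S:=\sum_{t\in S}f_t$ is a \emph{finite} sum of convex functions, hence convex. Here I need only invoke the conventions $(+\infty)-(+\infty)=+\infty$ and $0\cdot(+\infty)=+\infty$ fixed in the preliminaries so that addition is well defined on $\R_\infty$ and compatible with the convex combinations used in the convexity inequality.

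Second, for each $S_0\in\mathcal{F}(T)$, set
$$
h_{S_0}(x):=\sup_{S\in\mathcal{F}(T),\;S\supset S_0}g_S(x).
$$
As a pointwise supremum of convex functions, $h_{S_0}$ is convex. By definition,
$$
\Big(\overline{\sum_{t\in T}}f_t\Big)(x)=\limsup_{S\uparrow T,\;|S|<\infty}g_S(x)=\inf_{S_0\in\mathcal{F}(T)}h_{S_0}(x).
$$
A generic infimum of convex functions need not be convex, so this final step is the part requiring care. The saving structure is that the family $\{h_{S_0}\}_{S_0\in\mathcal{F}(T)}$ is \emph{monotonically non-increasing} in $S_0$: if $S_0\subset S_0'$, then $\{S\in\mathcal{F}(T):S\supset S_0'\}\subset\{S\in\mathcal{F}(T):S\supset S_0\}$, and therefore $h_{S_0'}\le h_{S_0}$ pointwise. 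Moreover, $\mathcal{F}(T)$ is a directed set under inclusion.

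Third, I will prove the following easy lemma (sketched here, not grinded through): the pointwise infimum of a non-increasing net of convex functions on a directed set is convex. Indeed, fix $x,y\in X$ and $\lambda\in[0,1]$, and suppose first that $h(x),h(y)<+\infty$, where $h:=\inf_{S_0}h_{S_0}$. Given $\eps>0$, choose $S_0^x,S_0^y\in\mathcal{F}(T)$ with $h_{S_0^x}(x)<h(x)+\eps$ and $h_{S_0^y}(y)<h(y)+\eps$; by directedness, pick $S_0\supset S_0^x\cup S_0^y$, so that monotonicity yields $h_{S_0}(x)\le h_{S_0^x}(x)$ and $h_{S_0}(y)\le h_{S_0^y}(y)$. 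Convexity of $h_{S_0}$ then gives
$$
h(\lambda x+(1-\lambda)y)\le h_{S_0}(\lambda x+(1-\lambda)y)\le\lambda h(x)+(1-\lambda)h(y)+\eps,
$$
and letting $\eps\downarrow0$ finishes this case. The cases in which $h(x)$ or $h(y)$ equals $+\infty$ are handled by a standard direct verification of the convexity inequality using the conventions above.

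Combining the three steps yields convexity of $\overline{\sum_{t\in T}}f_t$. The only subtle point is the last step: without the monotonicity/directedness of the filter base $\mathcal{F}(T)$ under inclusion, the infimum would not preserve convexity, so I expect that to be the lone substantive ingredient, with everything else being bookkeeping.
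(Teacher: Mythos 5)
Your proof is correct. The paper states this proposition without proof (``can be easily verified''), and your three-stage decomposition --- finite sums of convex functions are convex, pointwise suprema of convex functions are convex, and the pointwise infimum of a \emph{non-increasing} net of convex functions over a directed index set is convex --- is precisely the natural argument that fills in that claim; the only non-routine ingredient is the last lemma, and your use of directedness of $\mathcal{F}(T)$ to find a common index dominating $S_0^x$ and $S_0^y$ is exactly what makes the infimum step work where a general infimum of convex functions would fail.
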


If $\lim_{S\uparrow T,\;|S|<\infty}\; \sum_{t\in S}f_t(x)$ exists
{(in $[-\infty,+\infty]$)}
for all $x\in X$, we write simply $\sum_{t\in T}f_t$ instead of $\overline\sum_{t\in T}f_t$.
Definition \eqref{La0} is an extension of the corresponding definition from \cite{BorZhu96,BorIof96,Las01} to the infinite setting.
One obviously has
\begin{gather}
\label{E2.3}
\Lambda_U(\{f_t\}_{t\in T})\le \inf_U\overline{\sum_{t\in T}}f_{t},
\end{gather}
and the inequality can be strict even when $T$ is finite, as the following example shows (see also \cite[Example~3.10]{FabKruMeh24}).
\begin{example}
If $U_1$, $U_2\subset X$ are such that $U_1\cap U_2=\emptyset$ and $(\cl{U_1})\cap (\cl{U_2})\ne \emptyset$, then
$$0=\Lambda_X(\{i_{U_1}, i_{U_2}\})<  \inf(i_{U_1}+ i_{U_2})=+\infty.$$
\end{example}

The collection $\{f_{t}\}_{t\in T}$ is said to be \emph{uniformly lower semicontinuous}
on $U$ if
\begin{gather}
\label{La0qc}
\inf_U\overline{\sum_{t\in T}}f_{t}\le
\Lambda_U(\{f_t\}_{t\in T}).
\end{gather}
{In view of \eqref{E2.3}, inequality \eqref{La0qc} can only hold as equality.}

{As shown in}
\cite{HanKruLop1},
the collection $\{f_{t}\}_{t\in T}$ is uniformly lower semicontinuous on $U$ if and only if the next two conditions are satisfied:
\begin{gather*}
\limsup_{S\uparrow T,\;|S|<\infty}\Big(\inf_U\sum_{t\in S} f_t- {\Lambda}_U\big(\{f_t\}_{t\in S}\big)\Big)\le0,
\\
\inf_U\overline{\sum_{t\in T}}f_{t}
\le \liminf_{
S\uparrow T, |S|<\infty} \inf_U
\sum_{t\in S}
f_t.
\end{gather*}
{(The latter condition is referred to in \cite{HanKruLop1} as \emph{inf-stability} of the sum.)}
Inequality \eqref{La0qc}
plays the role of a qualification condition.
It lies at the core of the \emph{decoupling approach} (see, e.g., \cite{HanKruLop1} and the references therein).
{Below},
we discuss some situations where uniform lower semicontinuity is satisfied.
Other typical sufficient conditions can be found in \cite{Las01,Pen13,FabKruMeh24,HanKruLop1}.

The following two propositions provide fairly general situations in which the uniform lower semicontinuity holds.
The first one uses the \emph{weak} topology of $X$;
if $\dim X<+\infty$, it obviously coincides with the standard norm topology.

\begin{proposition}
\label{proa}
Suppose $f_{t}$ $(t\in T)$ are weakly lower
semicontinuous, and either $f_{t}$ $(t\in T)$ are
nonnegative or $T$ is finite. Then
$\{f_{t}\}_{t\in T}$ is
uniformly lower semicontinuous on every weakly compact subset of~$X$.
\end{proposition}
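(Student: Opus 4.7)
My strategy rests on two ingredients: the weak sequential compactness of $U$ to extract cluster points of near-optimal configurations, and the weak lower semicontinuity of the $f_t$'s to pass those bounds to the limit. I first dispatch the finite case and then bootstrap it to the nonnegative case via a finite-intersection-property argument, which handles arbitrary (even uncountable) $T$ without requiring a sequential exhaustion of the index set.

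\emph{Finite case.} Write $L_S:=\liminf_{\diam\{x_t\}_{t\in S}\to 0,\;x_t\in U}\sum_{t\in S}f_t(x_t)$; for finite $T$ the outer liminf collapses, so $\Lambda_U=L_T$. Pick configurations $\{x_t^{(n)}\}_{t\in T}\subset U$ with vanishing diameter and $\sum_{t\in T}f_t(x_t^{(n)})\to L_T$. Fixing some $t_0\in T$, weak sequential compactness of $U$ (Eberlein--Smulian in a Banach space) produces a subsequence along which $x_{t_0}^{(n_k)}\to\bar x\in U$ weakly, and the vanishing diameters force $x_t^{(n_k)}\to\bar x$ weakly for every $t\in T$. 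Weak lower semicontinuity of each $f_t$, combined with the elementary inequality $\sum_t\liminf_k\le\liminf_k\sum_t$ for finite sums of $\mathbb{R}_\infty$-valued terms, then yields $\sum_{t\in T}f_t(\bar x)\le L_T$, whence $\inf_U\sum_{t\in T}f_t\le\Lambda_U$.

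\emph{Nonnegative case.} Nonnegativity makes both $S\mapsto L_S$ and $S\mapsto\sum_{t\in S}f_t(x)$ non-decreasing on $\mathcal{F}(T)$ (any configuration on a larger $S$ restricts to one on the smaller, and extra nonnegative terms only add), so
\[\Lambda_U=\sup_{S\in\mathcal{F}(T)}L_S\quad\text{and}\quad \overline{\sum_{t\in T}}f_t(x)=\sup_{S\in\mathcal{F}(T)}\sum_{t\in S}f_t(x).\]
Assuming $\Lambda_U<+\infty$ (otherwise the inequality is trivial), set
\[V_{S_0}:=\Big\{x\in U:\sum_{t\in S_0}f_t(x)\le\Lambda_U\Big\}\qquad(S_0\in\mathcal{F}(T)).\]
Weak lower semicontinuity of the finite sum $\sum_{t\in S_0}f_t$ makes $V_{S_0}$ weakly closed, while the finite case applied to $\{f_t\}_{t\in S_0}$, together with $L_{S_0}\le\Lambda_U$, shows $V_{S_0}\neq\emptyset$. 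For any $S_0^{(1)},\ldots,S_0^{(k)}\in\mathcal{F}(T)$, the union $S^*:=\bigcup_i S_0^{(i)}$ lies in $\mathcal{F}(T)$, and nonnegativity forces $V_{S^*}\subset\bigcap_i V_{S_0^{(i)}}\neq\emptyset$, so the family $\{V_{S_0}\}$ enjoys the finite intersection property. Weak compactness of $U$ therefore produces $\bar x\in\bigcap_{S_0\in\mathcal{F}(T)}V_{S_0}$, and taking the supremum over $S_0$ in $\sum_{t\in S_0}f_t(\bar x)\le\Lambda_U$ gives $\overline{\sum_{t\in T}}f_t(\bar x)\le\Lambda_U$, hence $\inf_U\overline{\sum_{t\in T}}f_t\le\Lambda_U$, as required.

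The main obstacle I expect is accommodating uncountable $T$: there is no exhausting sequence $S_n\uparrow T$, and hence no way to construct a single sequence of witnesses that eventually contains every prescribed finite $S_0$. The finite-intersection-property/compactness device circumvents this entirely, reducing the global inequality to a term-by-term application of the finite case.
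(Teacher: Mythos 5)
Your proof is correct, and the second half takes a genuinely different route from the paper's. The finite case is essentially the paper's argument (weak sequential compactness plus weak lower semicontinuity of the finite sum at a common weak cluster point of the vanishing-diameter configurations). For the nonnegative case, however, the paper works directly with nets over the directed set $\mathcal{F}(T)$: for each finite $S$ it extracts a weak cluster point $x_S\in U$ of near-optimal configurations, and then extracts a further weak cluster point $\hat x$ of the net $(x_S)_{S\in\mathcal{F}(T)}$, using nonnegativity to pass from $\sum_{t\in S}$ to $\sum_{t\in S_0}$ before taking the limit. You instead reduce everything to the finite case and glue via the finite intersection property of the weakly closed sets $V_{S_0}=\{x\in U:\sum_{t\in S_0}f_t(x)\le\Lambda_U\}$. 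Both arguments hinge on weak compactness and on monotonicity of $S\mapsto L_S$ under nonnegativity, but your compactness device avoids the subnet-of-a-net extraction over $\mathcal{F}(T)$ (which the paper passes over rather quickly), and it makes the role of uncountable $T$ completely transparent. One small point worth making explicit: nonemptiness of $V_{S_0}$ at the exact level $\Lambda_U$ (rather than $\Lambda_U+\varepsilon$) needs the infimum of the weakly lower semicontinuous function $\sum_{t\in S_0}f_t$ to be attained on the weakly compact set $U$; this is standard, and is in any case delivered by the cluster point $\bar x$ your finite-case argument constructs, but it should be said.
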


\begin{proof}
Let $U$ be a weakly compact subset of $X$.
Suppose $f_{t}$ $(t\in T)$ are
nonnegative.
Choose an arbitrary $S\in\mathcal{F}(T)$.
{Let $\diam\{x_{ti}\}_{t\in S}\to0$.
Choose some $t\in S$.
Since $U$ is weakly compact, we can suppose that
$(x_{ti})$ weakly converges to some $x_S\in U$.
Since $\diam\{x_{ti}\}_{t\in S}\to0$ and the norm is a weakly \lsc\ function, $(x_{ti})$ weakly converge to $x_S$ for all $t\in S$.
The sum $\sum_{t\in S}f_t$ is weakly lower semicontinuous.
Hence,
$${\liminf_i
\sum_{t\in S}f_{t}(x_{ti})}\ge\sum_{t\in S}f_{t}(x_{S}).$$
In view of definition \eqref{La0} and the nonnegativity of the functions, we have
\begin{equation*}
{\Lambda}_U(\{f_{t}\}_{t\in T})\ge
\liminf_{S\uparrow T,\;|S|<\infty}\;\sum_{t\in S}f_{t}(x_S)\ge
\liminf_{S\uparrow T,\;|S|<\infty}\;\sum_{t\in S_0}f_{t}(x_S)
\end{equation*}
for any $S_0\in\mathcal{F}(T)$.
We may assume that the net $(x_S)_{S\in  \mathcal{F}(T)}$ weakly converges to some $\hat{x}\in U$.
Hence,
\begin{equation*}
{\Lambda}_U(\{f_{t}\}_{t\in T})\ge
\sum_{t\in S_0}f_{t}(\hat x).
\end{equation*}
Since $S_0\in\mathcal{F}(T)$ is arbitrary and in view of definition \eqref{f}, we have
}
$$
\Lambda_U(\{f_t\}_{t\in T})\ge{\overline{\sum_{t\in T}}}f_{t}(\hat{x})
\ge \inf_{x\in U} {\overline{\sum_{t\in T}}}f_{t},
$$
and the collection $\{f_{t}\}_{t\in T}$ is uniformly lower
semicontinuous on $U.$

{If $T$ is finite, then the above proof remains valid with $S=S_0=T$ independently of the signs of the functions.}
\qed
\end{proof}

\begin{proposition}
\label{P3.4}
Let $f_{1},f_{2}:X\to\R_\infty$ be convex, $\bx\in\dom f_{1}\cap\dom f_2$, and $0\in\sd f_1(\bx)+\sd f_2(\bx)$.
Then the pair $\{f_{1},f_{2}\}$ is uniformly lower semicontinuous on $X$.
\end{proposition}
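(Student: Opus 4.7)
The plan is to exploit the subgradient inequality at $\bar x$ for each function together with the fact that the hypothesis $0\in\sd f_1(\bx)+\sd f_2(\bx)$ makes $\bx$ a global minimizer of $f_1+f_2$.

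First I would select $x_1^*\in\sd f_1(\bx)$ and $x_2^*\in\sd f_2(\bx)$ with $x_1^*+x_2^*=0$. Applying the subgradient inequality to both functions and adding, I would obtain, for all $x,y\in X$,
\begin{equation*}
f_1(x)+f_2(y)\ge f_1(\bx)+f_2(\bx)+\langle x_1^*,x-y\rangle
\ge f_1(\bx)+f_2(\bx)-\|x_1^*\|\,\|x-y\|.
\end{equation*}
Setting $y=x$ also shows that $\bx$ minimizes $f_1+f_2$, so $\inf_X(f_1+f_2)=f_1(\bx)+f_2(\bx)$ (this value is finite since $\bx\in\dom f_1\cap\dom f_2$).

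Next I would specialize the definition of $\Lambda_X$ in \eqref{La0} to the finite index set $T=\{1,2\}$. Since $\mathcal{F}(T)$ has a maximum element $T$ itself, the outer $\liminf$ over $S\uparrow T$ collapses to $S=T$, giving
\begin{equation*}
\Lambda_X(\{f_1,f_2\})
=\liminf_{\|x-y\|\to 0,\ x,y\in X}\bigl(f_1(x)+f_2(y)\bigr).
\end{equation*}
Taking the $\liminf$ as $\|x-y\|\to0$ in the displayed chain of inequalities above yields $\Lambda_X(\{f_1,f_2\})\ge f_1(\bx)+f_2(\bx)=\inf_X(f_1+f_2)$, which is precisely the uniform lower semicontinuity condition \eqref{La0qc}.

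I do not anticipate a real obstacle: the only subtlety is verifying that, for the two-element index set, the nested $\liminf$ in \eqref{La0} really does reduce to the classical two-variable $\liminf$ over vanishing distance. Once that is observed, the Lipschitz-like bound coming from Cauchy--Schwarz does all the work and the conclusion is immediate.
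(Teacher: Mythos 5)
Your proof is correct and follows essentially the same route as the paper's: pick $x_1^*\in\sd f_1(\bx)$, $x_2^*\in\sd f_2(\bx)$ with $x_1^*+x_2^*=0$, add the two subgradient inequalities to get $f_1(x)+f_2(y)\ge f_1(\bx)+f_2(\bx)+\langle x_1^*,x-y\rangle$, and pass to the $\liminf$ as $\|x-y\|\to0$ after noting that for a two-element index set $\Lambda_X$ reduces to the classical two-variable lower limit. The only cosmetic quibble is that the bound $\abs{\langle x_1^*,x-y\rangle}\le\norm{x_1^*}\,\norm{x-y}$ is just the definition of the dual norm in a Banach space rather than Cauchy--Schwarz.
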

\begin{proof}
Let $x_{1}^{\ast }\in \partial f_{1}(\bar{x})$,
$x_{2}^{\ast }\in \partial f_{2}(\bar{x})$ and $x_{1}^{\ast }+x_{2}^{\ast}=0$.
Then
\begin{gather*}
f_1(x)\ge f_1(\bar x)+\langle x_1^*,x-\bar x\rangle
\qdtx{for all}
x\in X,\\
f_2(u)\ge f_2(\bar x)+\langle x_2^*,u-\bar x\rangle
\qdtx{for all}
u\in X,
\end{gather*}
and consequently,
\begin{gather*}
f_1(x)+f_2(u)\ge f_1(\bar x)+f_2(\bar x)+\langle x_1^*,x-u\rangle
\qdtx{for all}
x,u\in X.
\end{gather*}
Hence,
\begin{equation*}
\Lambda_X (\{f_{1},f_{2}\})=\liminf_{\substack{\left\Vert x-u\right\Vert\rightarrow0}}(f_1(x)+f_2(u))\ge f_{1}(\bar{x})+f_{2}(\bar{x})\ge\inf(f_{1}+f_{2}),
\end{equation*}
i.e., $\{f_{1},f_{2}\}$ is uniformly lower semicontinuous on $X$.
\qed
\end{proof}
\begin{remark}
The key sufficient condition $0\in\sd f_1(\bx)+\sd f_2(\bx)$ in Proposition~\ref{P3.4} implies that $\bx$ is a minimum of $f_{1}+f_{2}$.
It is satisfied, e.g., when $\bx$ is a minimum of $f_{1}+f_{2}$ and the subdifferential sum rule holds:
\begin{equation}
\label{aprox}
\partial(f_{1}+f_{2})(\bar{x})=\partial f_{1}(\bar{x})+\partial f_{2}(\bar{x}).
\end{equation}
Any qualification condition that guarantees
this sum rule such as the Fenchel-Moreau-Rockafellar assumption $\dom f_1\cap\Int\dom f_2\ne\es$
(see, e.g., \cite[Proposition~4.1.20]{CorHanLop23}), is sufficient for the uniform lower semicontinuity. 
It can be shown using similar arguments that other
subdifferential rules are also sufficient for this property.
For instance, instead of \eqref{aprox} one can use the formula
\begin{equation}
\label{aproxb}
\partial(f_{1}+f_{2})(\bar{x})={\bigcap}_{\varepsilon >0}{\cl}^{\|\cdot\|_*}
\big(\partial_\varepsilon f_{1}(\bar{x})+\partial_\varepsilon f_{2}(\bar{x})\big),
\end{equation}
where ${\cl}^{\|\cdot\|_*}$ denotes the weak$^*$-closure in $X^*$. It is worth observing that \eqref{aproxb} always holds when $f_{1}$, $f_{2}$ are convex proper lower semicontinuous and $X$ is reflexive \cite{HirPhe93}.
{Alternatively, the sufficient conditions in the above proposition are also ensured by sequential subdifferential formulas such as those in \cite{Thi97}: There are sequences $(x_n)_n$, $(y_n)_n\subset X$, $x_n^*\in \partial f_1(x_n)$, $y_n^*\in \partial f_2(y_n)$ such that
$\|x_n^*+y_n^*\|\to 0$ and
$$
\ x_n,\, y_n\to \bx, \ f_1(x_n)-\langle x_n^*,x_n-\bx\rangle\to f_1(\bx),\ f_2(y_n)-\langle y_n^*,y_n-\bx\rangle\to f_2(\bx).
$$
Recall that this formula always holds  when $f_1,$ $f_2$ are convex proper lower semicontinuous and $X$ is reflexive (see, e.g., \cite[Corollary 7.2.7]{CorHanLop23}).}
\end{remark}


The next theorem gives
dual necessary conditions for a point
to be a minimum of the
(infinite) sum
of \lsc\ convex functions
{under the uniform lower semicontinuity assumption}.
It is an adaption of the corresponding result from \cite{HanKruLop1} to the convex setting.

\begin{theorem}
\label{T3.1}
Let $\bx\in\dom\overline\sum_{t\in T}f_{t}$ be a minimum of $\overline\sum_{t\in T}f_{t}$. Suppose there is a number ${\de>0}$ such that
$f_t$ $(t\in T)$ are convex, lower semicontinuous and bounded from below on $B_\de(\bx)$, and $\{f_{t}\}_{t\in T}$ is uniformly lower semicontinuous on $B_\de(\bx)$.
Then,
for any $\eps>0$ and $S_0\in\mathcal{F}(T)$,
there exist an $S\in\mathcal{F}(T)$ and points $x_t\in B_\eps(\bx)$ $(t\in S)$
such that $S_0\subset S$, and
\begin{gather}
\label{T3.1-1}
\sum_{t\in S}\big(f_{t}(x_t)-f_{t}(\bx)\big)\le0,
\\
\notag
0\in\sum_{t\in S}{\sd}f(x_t)+\eps\B^*.
\end{gather}
\end{theorem}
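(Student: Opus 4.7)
The plan is to reduce the infinite sum to a finite one via the uniform lower semicontinuity assumption, then apply the Ekeland variational principle on the product space $X^{|S|}$, and finally extract the fuzzy multiplier rule from the convex subdifferential calculus.

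First, since $\bar x$ minimizes $\overline{\sum_{t\in T}} f_t$, the uniform lower semicontinuity on $B_\de(\bar x)$ combined with \eqref{La0qc} gives $\overline{\sum_{t\in T}} f_t(\bar x)\le \Lambda_{B_\de(\bar x)}(\{f_t\}_{t\in T})$. Given $\varepsilon>0$ and $S_0\in\mathcal F(T)$, fix $\eta\in(0,\varepsilon^2/4)$. Unpacking the $\limsup$ in \eqref{f} at $\bar x$ together with the outer $\liminf$ in \eqref{La0}, one can enlarge $S_0$ to a finite $S\in\mathcal F(T)$ such that
$$\sum_{t\in S} f_t(\bar x)\le \overline{\sum_{t\in T}} f_t(\bar x)+\eta \quad\text{and}\quad \liminf_{\substack{\diam\{x_t\}_{t\in S}\to 0\\ x_t\in B_\de(\bar x)}}\ \sum_{t\in S} f_t(x_t)\ge \overline{\sum_{t\in T}} f_t(\bar x)-\eta.$$
The inner $\liminf$ then provides $\de'\in(0,\de)$ with $\sum_{t\in S} f_t(x_t)\ge \sum_{t\in S} f_t(\bar x)-2\eta$ whenever $\{x_t\}_{t\in S}\subset B_\de(\bar x)$ and $\diam\{x_t\}_{t\in S}<\de'$. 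Since $\max_{t\in S}\|x_t-\bar x\|<\de'/2$ forces $\diam\{x_t\}_{t\in S}<\de'$, the diagonal point $\bar{\mathbf x}:=(\bar x,\ldots,\bar x)\in X^{|S|}$ is a $2\eta$-minimizer of $F(\mathbf x):=\sum_{t\in S} f_t(x_t)$ on the open set $V:=\{\mathbf x\in X^{|S|}\mid \max_{t\in S}\|x_t-\bar x\|<\de'/2\}$, and $F$ is convex, lower semicontinuous and bounded below on $\cl V$ by hypothesis.

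Next, I would apply the Ekeland variational principle to the proper, lower semicontinuous, convex function $F+i_{\cl V}$ on $X^{|S|}$ equipped with the $\ell_\infty$-metric $\rho(\mathbf x,\mathbf y):=\max_{t\in S}\|x_t-y_t\|$, choosing Ekeland parameters so that the point $\mathbf x^*=(x_t^*)_{t\in S}$ produced satisfies $\rho(\mathbf x^*,\bar{\mathbf x})\le\sqrt{2\eta}<\min\{\varepsilon,\de'/2\}$. Then $\mathbf x^*$ lies in the interior of $V$, $x_t^*\in B_\varepsilon(\bar x)$ for every $t\in S$, the inequality $F(\mathbf x^*)\le F(\bar{\mathbf x})$ is exactly \eqref{T3.1-1}, and $\mathbf x^*$ globally minimizes $F+i_{\cl V}+\sqrt{2\eta}\,\rho(\cdot,\mathbf x^*)$ on $X^{|S|}$, yielding $0$ as a subgradient of this sum. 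Since $\mathbf x^*$ is interior to $V$, the indicator contributes nothing; applying the Fenchel-Moreau-Rockafellar sum rule (justified by continuity of $\rho$) and using the separability $\sdf F(\mathbf x^*)=\prod_{t\in S}\sdf f_t(x_t^*)$ together with the fact that the subdifferential of the $\ell_\infty$-norm at the origin is the $\ell_1$-unit ball in $(X^*)^{|S|}$, one obtains $\xi_t\in\sdf f_t(x_t^*)$ and $\zeta_t\in X^*$ ($t\in S$) with $\xi_t+\zeta_t=0$ and $\sum_{t\in S}\|\zeta_t\|\le\sqrt{2\eta}$. Summing over $t\in S$ gives $\|\sum_{t\in S}\xi_t\|\le\sqrt{2\eta}<\varepsilon$, which is precisely $0\in\sum_{t\in S}\sdf f_t(x_t^*)+\varepsilon\B^*$.

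The main obstacle is the bookkeeping in the first step: juggling the $\limsup$ in the definition of $\overline{\sum_{t\in T}} f_t(\bar x)$ together with the two nested $\liminf$'s in the definition of $\Lambda_{B_\de(\bar x)}$ to produce a single finite $S\supset S_0$ and a single radius $\de'$ that control both bounds simultaneously, and verifying that the Ekeland point lies strictly inside $V$ so that the indicator $i_{\cl V}$ truly drops out of the subdifferential identity — the rest reduces to the standard product-space optimality calculus for convex functions.
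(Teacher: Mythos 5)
Your overall architecture -- pass to a finite $S\supset S_0$ via the uniform lower semicontinuity, minimize the decoupled sum $F(\mathbf{x})=\sum_{t\in S}f_t(x_t)$ on a product space, apply Ekeland, and sum the componentwise subgradients -- is the right family of ideas, and your first step is sound: both the bound $\sum_{t\in S}f_t(\bx)\le\overline\sum_{t\in T}f_t(\bx)+\eta$ and the lower bound coming from $\Lambda_{B_\de(\bx)}$ hold for \emph{all} sufficiently large finite $S$, so a single $S$ and a single $\de'$ can indeed be extracted. The gap is in the Ekeland step. You fix $\eta<\eps^2/4$ \emph{before} $\de'$ is produced and then assert $\sqrt{2\eta}<\de'/2$, which is what places $\mathbf{x}^*$ in the interior of $V$ so that $i_{\cl V}$ drops out of the subdifferential identity. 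Nothing guarantees this, and the obstruction is not repaired by reordering the choices: in Ekeland's principle the product (penalty coefficient)$\times$(radius) equals the defect $2\eta$, so you need simultaneously (radius)$<\de'/2$ and (coefficient)$\le\eps$, i.e.\ $4\eta/\de'\le\eps$. Already for two $L$-Lipschitz convex functions (e.g.\ $f_1(u)=Lu$, $f_2(u)=-Lu+u^2$ on $\R$, which do satisfy the uniform lower semicontinuity) one has $\de'(\eta)\asymp\eta/L$, so the coefficient is forced to be of order $L$ no matter how $\eta$ and $\lambda$ are tuned; your argument then only yields $0\in\sum_{t\in S}\sd f_t(x_t)+cL\B^*$ for a fixed constant $c$, not the required $\eps\B^*$. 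If instead $\mathbf{x}^*$ lands on the boundary of $V$, the normal cone to $\cl V$ enters the optimality condition and the estimate is lost entirely.

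The standard way out (and the mechanism behind the result of \cite{HanKruLop1} that the paper invokes) is to control the spread not through a shrinking feasible set but through a translation-invariant convex penalty: minimize $\sum_{t\in S}f_t(x_t)+r\max_{t,s\in S}\|x_t-x_s\|$ over a box of \emph{fixed} radius (say $\de/2$). Uniform lower semicontinuity together with the lower boundedness of the $f_t$'s on $B_\de(\bx)$ (which is exactly where that hypothesis is needed) shows that for $r$ large enough the infimum of the penalized functional is still $\ge\sum_{t\in S}f_t(\bx)-2\eta$, so the diagonal point remains a $2\eta$-minimizer; and since the penalty is invariant under $x_t\mapsto x_t+h$ for all $t$ simultaneously, every subgradient $(\zeta_t)_{t\in S}$ of it satisfies $\sum_{t\in S}\zeta_t=0$, so the arbitrarily large weight $r$ contributes nothing after summation over $t$. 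The Ekeland radius can then be fixed at $\min\{\eps,\de/2\}/2$ independently of $\eta$, and choosing $\eta$ small relative to this fixed radius produces the $\eps\B^*$ term without circularity. Your final bookkeeping -- the $\ell^1$--$\ell^\infty$ duality on $(X^*)^{|S|}$, the summation of components, and reading \eqref{T3.1-1} off the Ekeland decrease -- is correct and carries over verbatim once this modification is made.
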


A slightly more
{restrictive} version of the uniform lower semicontinuity has been introduced recently in \cite{FabKruMeh24,HanKruLop1}
{as an analytical counterpart of the
\emph{sequential uniform lower semicontinuity (ULC property)} used in \cite{BorIof96,BorZhu05}}.
The collection $\{f_{t}\}_{t\in T}$ is \emph{firmly uniformly lower semicontinuous} on $U$ if
$$
\Theta_U(\{f_t\}_{t\in T})=0,
$$
where
$$
\Theta_U(\{f_t\}_{t\in T})
:=\limsup_{S\uparrow T,\;|S|<\infty}\; \limsup_{\substack{\diam\{x_t\}_{t\in S}\to0\\ x_t\in\dom f_t\,(t\in S)}}\;
\inf_{x\in U} \max\Big\{\max_{t\in S}d(x,x_t),\overline{\sum_{t\in T}}f_t(x)-\sum_{t\in S}f_{t}(x_{t})\Big\}.
$$
Observe that
\begin{gather*}
\Theta_U(\{f_t\}_{t\in T}) \ge\inf_U\overline{\sum_{t\in T}}f_{t} -{\Lambda}_U(\{f_{t}\}_{t\in T}),
\end{gather*}
and, so, every firmly uniformly lower semicontinuous collection is uniformly lower semicontinuous (see \cite{HanKruLop1}).

The firm uniform lower semicontinuity property has been shown in \cite{HanKruLop1} to be an appropriate
{tool} for fuzzy \Fr\ and Clarke subdifferential
{sum} rules.
In the convex setting, the latter one takes the following form.

\begin{theorem}
\label{T3.2}
Let $\bx\in\dom\overline{\sum}_{t\in T}f_t$ and $x^*\in\sd{\big( \overline{\sum}}_{t\in T}f_t\big)(\bx)$.
Suppose there is a number ${\de>0}$ such that
$f_t$ $(t\in T)$ are convex, lower semicontinuous and bounded from below on $B_\de(\bx)$, and $\{f_{t}\}_{t\in T}$ is firmly uniformly lower semicontinuous on $B_\de(\bx)$. Then,
for any $\eps>0$ and $S_0\in\mathcal{F}(T)$,
there exist an $S\in\mathcal{F}(T)$ and points $x_t\in B_\eps(\bx)$ $(t\in S)$
such that $S_0\subset S$, and
\begin{gather}
\label{T3.2-1}
\sum_{t\in S}\big(f_{t}(x_t)-f_{t}(\bx)\big)< \varepsilon,
\\
\notag
x^*\in\sum_{t\in S}{\sd}f(x_t)+\eps\B^*.
\end{gather}
\end{theorem}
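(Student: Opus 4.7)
The plan is to reduce Theorem~\ref{T3.2} to Theorem~\ref{T3.1} by enlarging the collection $\{f_t\}_{t\in T}$ with a single new index that carries the subgradient $x^*$ as the derivative of a continuous affine function. Concretely, I would introduce a fresh symbol $t^*\notin T$, set $T':=T\cup\{t^*\}$, and define $f_{t^*}(x):=-\langle x^*,x-\bx\rangle$. The inclusion $x^*\in\sd\big(\overline{\sum_{t\in T}}f_t\big)(\bx)$ then translates exactly into the assertion that $\bx$ is a minimum of the convex function $\overline{\sum_{t\in T'}}f_t=\overline{\sum_{t\in T}}f_t-\langle x^*,\cdot-\bx\rangle$, which places us in the setting of Theorem~\ref{T3.1}.

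Next I would verify that the enlarged family satisfies the hypotheses of Theorem~\ref{T3.1} on $B_\de(\bx)$. Convexity, lower semicontinuity, and boundedness from below of $f_{t^*}$ are immediate, as this function is continuous and bounded in absolute value by $\|x^*\|\de$ on the ball. For the uniform lower semicontinuity, I would show that firm uniform lower semicontinuity passes from $\{f_t\}_{t\in T}$ to $\{f_t\}_{t\in T'}$ on $B_\de(\bx)$. The key estimate is that for any finite $S'\supset\{t^*\}$, any clustered configuration $(x_t)_{t\in S'}$, and any $x\in B_\de(\bx)$, one has $|\langle x^*,x-x_{t^*}\rangle|\le\|x^*\|\cdot\max_{t\in S'}d(x,x_t)$, so that the perturbation of the inner expression in the definition of $\Theta$ is absorbed into the $\max$ at the cost of a factor $(1+\|x^*\|)$; this yields $\Theta_{B_\de(\bx)}(\{f_t\}_{t\in T'})\le(1+\|x^*\|)\,\Theta_{B_\de(\bx)}(\{f_t\}_{t\in T})=0$. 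Firm ULC in turn implies ULC, as noted in the excerpt, so Theorem~\ref{T3.1} is applicable to the augmented family.

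Given $\eps>0$ and $S_0\in\mathcal{F}(T)$, I would then set $\eps':=\eps/(1+\|x^*\|)$ and $S_0':=S_0\cup\{t^*\}$ and apply Theorem~\ref{T3.1} to $\{f_t\}_{t\in T'}$ on $B_\de(\bx)$ with parameters $\eps'$ and $S_0'$. This produces $S'\in\mathcal{F}(T')$ with $S_0'\subset S'$ and points $x_t\in B_{\eps'}(\bx)\subset B_\eps(\bx)$ for $t\in S'$ such that $\sum_{t\in S'}\big(f_t(x_t)-f_t(\bx)\big)\le 0$ and $0\in\sum_{t\in S'}\sd f_t(x_t)+\eps'\B^*$. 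Setting $S:=S'\setminus\{t^*\}\supset S_0$ and using $\sd f_{t^*}(x_{t^*})=\{-x^*\}$ rewrites the dual inclusion as $x^*\in\sum_{t\in S}\sd f_t(x_t)+\eps'\B^*\subset\sum_{t\in S}\sd f_t(x_t)+\eps\B^*$. Moreover, $f_{t^*}(\bx)=0$ and $f_{t^*}(x_{t^*})-f_{t^*}(\bx)=-\langle x^*,x_{t^*}-\bx\rangle\ge-\|x^*\|\eps'$, so
\[
\sum_{t\in S}\big(f_t(x_t)-f_t(\bx)\big)=\sum_{t\in S'}\big(f_t(x_t)-f_t(\bx)\big)-\big(f_{t^*}(x_{t^*})-f_{t^*}(\bx)\big)\le\|x^*\|\eps'=\frac{\|x^*\|}{1+\|x^*\|}\,\eps<\eps,
\]
which is exactly the strict inequality \eqref{T3.2-1}.

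The only non-routine step is the preservation of firm uniform lower semicontinuity under the augmentation, which is what unlocks the reduction to Theorem~\ref{T3.1}; the rest is bookkeeping and a standard rescaling of $\eps$ to absorb $\|x^*\|$. Once the ULC transfer is in hand, Theorem~\ref{T3.2} drops out as a clean corollary of Theorem~\ref{T3.1}.
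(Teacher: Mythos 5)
Your argument is correct, and it is genuinely different from what the paper does: the paper gives no proof of Theorem~\ref{T3.2} at all, presenting it as the convex specialization of a Clarke-subdifferential fuzzy sum rule established in \cite{HanKruLop1}, whereas you derive it self-containedly from Theorem~\ref{T3.1} by adjoining the affine function $f_{t^*}:=-\langle x^*,\cdot-\bx\rangle$ to the family. All the essential steps check out: since $f_{t^*}$ is finite, $\overline{\sum}_{t\in T'}f_t=\overline{\sum}_{t\in T}f_t+f_{t^*}$, so the subgradient inclusion is exactly the statement that $\bx$ globally minimizes the augmented sum; $\partial f_{t^*}\equiv\{-x^*\}$ converts the resulting inclusion $0\in\sum_{t\in S'}\partial f_t(x_t)+\eps'\B^*$ into the desired one; and the rescaling $\eps'=\eps/(1+\|x^*\|)$ turns the inequality $\sum_{t\in S'}(f_t(x_t)-f_t(\bx))\le0$ into the strict bound \eqref{T3.2-1}. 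The one step worth writing out more carefully is the transfer of firm uniform lower semicontinuity: in $\Theta_{B_\de(\bx)}(\{f_t\}_{t\in T'})$ the term $\max_{t\in S'}d(x,x_t)$ also contains $d(x,x_{t^*})$, which is not present in the original quantity; it is controlled by $\max_{t\in S}d(x,x_t)+\diam\{x_t\}_{t\in S'}$, and the extra diameter term vanishes in the limit, after which your factor $(1+\|x^*\|)$ bound holds. Note also that Theorem~\ref{T3.1} only needs (plain) uniform lower semicontinuity of the augmented family, so establishing firm ULC is more than enough. What your route buys is a proof within the paper's own toolkit that makes the relationship between the two theorems explicit (the sum rule is the minimality statement shifted by a linear functional); what it costs is reliance on Theorem~\ref{T3.1}, which the paper likewise only imports from \cite{HanKruLop1}, so the argument is a reduction rather than a proof from first principles.
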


\begin{remark}
\label{R3.1}
If $T$ is finite, conditions \eqref{T3.1-1} and \eqref{T3.2-1} can be replaced by the more traditional ones:
\begin{gather}
\label{R3.1-1}
|f_t(x_t) - f_t(\bx)|<\varepsilon\quad  (t\in T).
\end{gather}
{Indeed, if $T$ is finite, thanks to the lower semicontinuity of the functions, taking a smaller $\de>0$, one can ensure that $f_t(\bx) - f_t(x_t)<\varepsilon$ for all $t\in T$.
This together with each of the conditions \eqref{T3.1-1} and \eqref{T3.2-1} (with $S=T$ thanks to $T$ being finite) implies \eqref{R3.1-1} (with possibly a different $\eps>0$).}
Thus, taking into account Proposition \ref{proa},
{Theorem~\ref{T3.2}}
recovers the standard fuzzy convex subdifferential rules for finite sums of convex lower semicontinuous functions in reflexive Banach spaces (see, e.g., \cite{Pen96,Thi97}).
\end{remark}

\section{Duality in infinite convex optimization}
\label{S5}

\if{
\PM{
``To illustrate the value of the semiinfinite problem (P), [8,11] do
not provide enough justification from my point of view. These papers have,
altogether, five citations. Can you comment on reasonable applications of
this model? You may cite textbooks dealing with semiinfinite optimization
here.''}
}\fi

In this section, we investigate two dual problems to the infinite convex optimization problem~(\ref{problemp}).
Under the classical Slater condition,
both dual problems
{exhibit} strong duality, ensuring zero duality gap and
{the existence of} dual solutions.

Thanks to the next \emph{reduction lemma},
when $X$ is separable,
the index set $T$ in \eqref{problemp} can be assumed at most countable.

\begin{lemma}
\label{reduction}
Let $X$ be separable, $f_{t}$ $(t\in T)$ be \lsc.
If $T$ is uncountable, then there exists a countable subset $D\subset T$ such that
{$\sup_{t\in T}f_{t}=\sup_{t\in D}f_{t}$}.
\end{lemma}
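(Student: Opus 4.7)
The plan is to pass from functions to their epigraphs and exploit second countability of separable metric spaces. Set $f:=\sup_{t\in T}f_t$. Since each $f_t$ is lower semicontinuous, $\epi f_t$ is closed in $X\times\R$, and the standard identity
\begin{equation*}
\epi f=\bigcap_{t\in T}\epi f_t
\end{equation*}
reduces the problem to finding a countable $D\subset T$ with $\bigcap_{t\in D}\epi f_t=\bigcap_{t\in T}\epi f_t$, because equality of epigraphs is equivalent to equality of the two suprema as extended-real-valued functions.

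Taking complements in $X\times\R$, the latter amounts to realising the open set
\begin{equation*}
U:=(X\times\R)\setminus\bigcap_{t\in T}\epi f_t=\bigcup_{t\in T}\bigl((X\times\R)\setminus\epi f_t\bigr)
\end{equation*}
as a countable union of sets from the family $\{(X\times\R)\setminus\epi f_t\}_{t\in T}$. This is precisely a Lindel\"of-type statement: the family is an open cover of $U$, and I extract a countable subcover.

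To justify the countable subcover, I would invoke that a separable Banach space $X$ is second countable, hence so is $X\times\R$, and consequently every open subspace is Lindel\"of. Thus there exist $t_1,t_2,\ldots\in T$ with
\begin{equation*}
\bigcup_{n\in\N}\bigl((X\times\R)\setminus\epi f_{t_n}\bigr)=U.
\end{equation*}
Setting $D:=\{t_n:n\in\N\}$ and taking complements back gives $\bigcap_{n\in\N}\epi f_{t_n}=\bigcap_{t\in T}\epi f_t$, i.e., $\sup_{t\in D}f_t=\sup_{t\in T}f_t$, as required.

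The argument is essentially a one-line topological observation once the epigraphical reformulation is in place; I do not foresee a real obstacle. The only point deserving care is the appeal to the Lindel\"of property on the open subspace $U$ rather than on the whole ambient space, but this is immediate from second countability being hereditary.
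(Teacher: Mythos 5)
Your proof is correct. The key tool is the same as in the paper's proof --- the Lindel\"of property of subspaces of a separable metric space, used to extract a countable subcover from an open cover indexed by $T$ --- but the decomposition is genuinely different. The paper stays in $X$: for each rational $r$ it covers the open superlevel set $\{x\in X\mid \sup_{t\in T}f_t(x)>r\}$ by the open sets $\{x\in X\mid f_t(x)>r\}$, extracts a countable $D_r\subset T$, sets $D:=\bigcup_{r\in\mathbb{Q}}D_r$, and recovers the supremum via the density of the rationals. You instead work once in the product $X\times\R$, using the identity $\epi\big(\sup_{t\in T}f_t\big)=\bigcap_{t\in T}\epi f_t$ and extracting a single countable subcover of the open complement of this intersection; lower semicontinuity enters in exactly the same way in both arguments (openness of the relevant complements, respectively superlevel sets). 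Your route buys a cleaner, one-shot application of the Lindel\"of property with no auxiliary rational parameter and no final density argument, at the negligible cost of passing to the product space; the paper's route avoids epigraphs and keeps everything in $X$. The only points worth a sentence of care in a final write-up are that the epigraph determines the function because all functions here take values in $\R\cup\{+\infty\}$ (so no $-\infty$ issues arise when reading $\sup_{t\in D}f_t$ off $\bigcap_{t\in D}\epi f_t$), and that the extracted index set, if finite, is still countable and may be enlarged arbitrarily since adding indices only helps.
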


\begin{proof}
Let $T$ be uncountable.
Fix
an arbitrary rational number $r$.
Denote $f:=\sup_{t\in T}f_{t}$, $U:=\{x\in X\mid f(x)>r\}$ and $U_t:=\{x\in X\mid f_t(x)>r\}$ $(t\in T)$.
If $f(x)>r$
for some $x\in X$, then there is a $t\in T$ such that $f_t(x)>r$.
Hence,
{$U\subset\bigcup_{t\in T}U_t$}.
For each $t\in T$, the set
{$U_t$}.
is open since $f_t$ is \lsc, while the (open, hence, separable) set
{$U$}
endowed with the induced metric topology of $X$ is Lindel\"{o}f
(i.e., from any open cover of $U$ we
can extract a countable subcover \cite{Mun00}).
Thus, we can find a countable subset
$D_{r} \subset T$ such that
$U\subset\bigcup_{t\in{D_{r}}}U_t \subset\bigcup_{t\in{D}}U_t$,
where $D$ is the union of
$D_{r}$ over all rational $r$.
Thus, $D$ is countable and, for any $x\in X$ and rational $r<f(x)$, we have $\sup_{t\in D}f_t(x)>r$.
The density of the rational numbers in $\mathbb{R}$ entails $f(x)\le\sup_{t\in D}f_t(x)$.
Since the opposite inequality holds trivially, and $x\in X$ is arbitrary, the proof is complete.
\qed\end{proof}

In the rest of the section, we assume, for the sake of simplicity, that $T=\N$, the functions $f_k$ $(k\in\N)$ are convex,
{and $C:=\dom f_0\cap\{x\in X\mid f_k(x)\le 0 \ \forall k\in\N\}\ne\es$}.

Along with problem \eqref{problemp} which takes the form
\begin{equation*}
\inf_{f_{k}(x)\leq 0\ (k\in\N)}\ f_0(x),
\end{equation*}
we consider the family of
perturbed problems depending on $\varepsilon \in \mathbb{R}$:
\begin{equation}
\label{pepsilon}
\tag{\rm P$_\eps$}
\inf_{f_{k}(x)\leq \eps\ (k\in\N)}\ f_0(x),
\end{equation}
Let $v:\mathbb{R}\rightarrow[-\infty,\infty)$ denote the (convex) \emph{value function} which
assigns to each $\varepsilon \in \mathbb{R}$ the optimal value of the perturbed problem \eqref{pepsilon}.
Then $v(\mathtt{P})=v(0)$, where $v(\mathtt{P})$ stands for the optimal value of problem \eqref{problemp}, and,
for all $\eps\ge0$ and $x\in C$,
we have $v(\varepsilon )\leq v(0)\leq
f_{0}(x)<\infty $;
hence, $[0,+\infty)\subset \dom v.$

Two types of \emph{Lagrangians} are defined
for all $x\in X$, $\la_{\infty},\la_1,\la_2,\ldots\in
{\R_+}$,
{and any $m=0,1\ldots$}
as follows:
\begin{gather}
\label{Lag}
L(x,\lambda,\lambda_{\infty}):= f_{0}(x)+\overline{\sum_{k\in\N}}\lambda_{k}f_{k}(x) +\lambda_{\infty}f_{\infty}(x),
\\
\label{Lag2}
\displaystyle L_{m}(x,\lambda):=\displaystyle f_{0}(x)
+\sum_{k=1}^m\lambda_{k}f_{k}(x) +
\sum_{k=m+1}^{+\infty}
\lambda_{k}f_{k}^{+}(x),
\end{gather}
\if{
\AK{31/07/24.
To Marco: I write the summation indices this way to save the vertical space; after this section is merged with the main part, the manuscript can become too long.
The sign $\sum$ looks a bit too big in the article style; this may resolve automatically after the merger; then we will be able to restore the conventional positions of the indices.}
}\fi
where $\la:=(\la_1,\la_2,\ldots)$,
$f_{k}^{+}:=\max \{f_{k},0\}$ is the positive part of $f_{k}$, and
\begin{align}
\label{infty}
f_{\infty }(x):=\limsup_{k\rightarrow\infty}f_{k}(x)
\quad (x\in X).
\end{align}
By convention, the first sum in \eqref{Lag2} equals zero when $m=0$,
and the infinite sums are defined in accordance with the standard definition
\eqref{f}
adopted in this paper:
for any
{$x\in X$ and}
$j\in\N$,
\begin{align}
\overline{\sum_{k>j}}\lambda
_{k}f_{k}(x)&:=\limsup_{n\rightarrow +\infty }\sum_{k=
{j+1}
}^n\lambda
_{k}f_{k}(x).
\label{deflimsup}
\end{align}

Employing Lagrangians \eqref{Lag} and \eqref{Lag2}, one can associate with \eqref{problemp} two dual problems:
\begin{gather}
\tag{\rm D}
\label{problemd}
\sup_{\lambda \in \ell^{1}_{+},\; \lambda_{\infty }\in \mathbb{R}_{+}}\ \inf_{x\in X}\ L(x,\lambda ,\lambda_{\infty }),
\\
\tag{\rm D$_m$}
\label{problemdmas}
\sup_{\lambda \in \ell^{\infty}_{+}}\ \inf_{x\in X}\ L_m(x,\lambda),
\end{gather}
where $\ell^{1}_{+}$ and $\ell^{\infty}_{+}$ denote, respectively, the subsets of $\ell^{1}$ and $\ell^{\infty}$ (the usual $\ell^{p}$ spaces with $p=1$ and $p=\infty$, respectively), whose members have nonnegative elements.
Let $v(\mathtt{D})$ and $v(\mathtt{D}_m)$ stand for the optimal values of
problems \eqref{problemd}
and \eqref{problemdmas},
respectively.

In the framework of the conventional convex optimization (with $m$ constraints), problems \eqref{problemd} and \eqref{problemdmas} reduce
to the usual dual problem:
%
\begin{gather*}
\sup_{\lambda=(\la_1,\ldots,\la_m) \in \R^{m}_{+}}\ \inf_{x\in X}\ \Big(f_{0}(x)
 +\sum\limits_{k=1}^m\lambda_{k}f_{k}(x)\Big).
\end{gather*}
{Indeed, it suffices to set}
$f_{m+1}\equiv f_{m+2}\equiv\ldots{\equiv-1}$.
{Then $f_\infty \equiv-1$ (see \eqref{infty})} and,
{in view of \eqref{Lag},}
problem \eqref{problemd}
{immediately gives $\la_\infty=0$}.
In this ordinary framework, we always have the
equality $(\cl v)(0)=v(\mathtt{D})$,
where $\cl v$ is the \lsc\ envelope of $v$
(see, e.g., \cite[Theorem 2.6.1\,(iii)]{Zal02}). Moreover, provided that the \emph{Slater condition}
\begin{equation}
\label{Sl}
\Big\{x\in\dom f_{0}\mid
{\sup_{k\in\N}f_{k}(x)}<0\Big\}\neq \emptyset
\end{equation}
is satisfied,
strong duality always holds in the ordinary convex optimization: $v(\mathtt{P})= v(\mathtt{D})$ and the value $v(\mathtt{D})$ is attained at some $\la\in\R^{m}_{+}$ (see, e.g.,  \cite[Theorem~2.9.3]{Zal02}).
 Our first aim is to show that in
infinite convex optimization
the main relations between the optimal values are maintained,
and strong duality occurs under
the Slater condition.

We introduce the following sets and recall some of their properties that can be find in  \cite{CorHan24}:
\begin{align*}
\Delta &:=\Big\{(\la_1,\la_2,\ldots)
\in \ell_{+}^{1}\mid \sum_{k=1}^{+\infty}\lambda _{k}\leq
1\Big\} ,
\\
\Delta _{0}&:=\Big\{(\la_1,\la_2,\ldots)
\in \ell_{+}^{1}\mid \sum_{k=1}^{+\infty}\lambda _{k}=1,\;\lambda _{k}=0\text{ for all but finitely many }k\in\N\Big\} .
\end{align*}
The dual space of  $\ell ^{\infty }$ can be expressed as the direct sum $(\ell ^{\infty })^{\ast }=\ell ^{1}\oplus (\ell ^{1})^{\perp }$, i.e., every
$\hat{\lambda}\in (\ell ^{\infty })^{\ast }$ is uniquely
{represented} as the sum of a
$\lambda\in \ell ^{1}\ (\subset (\ell ^{\infty })^{\ast })$ and a
$\mu\in (\ell ^{\infty })^{\ast }$ such that $\mu(\lambda')=0$, for all $\lambda'\in \ell ^{1}$.

\sloppy
We also have
$\Delta _{0}\subset(\ell ^{\infty })^{\ast }$,
and the
closure of $\Delta _{0}$ in the weak$^*$ topology $w^{\ast }:=\sigma ((\ell ^{\infty })^{\ast
},\ell ^{\infty })$ is the $w^{\ast }$-compact set (see \cite[Lemma~15]{CorHan24})
\begin{gather*}
\cl^{w^{\ast}}\Delta_{0}=\Big\{\lambda +\mu \in (\ell
^{\infty })^{\ast }\mid\  \lambda
{=(\la_1,\la_2,\ldots)}
\in \ell_{+}^{1},\;
\mu \in (\ell
^{1})^{\perp },\text{ }\sum_{k=1}^{+\infty}\lambda _{k}+\mu ({\rm e})=1\Big\},
\end{gather*}
where ${\rm e}:=(1,1,\ldots)\in \ell ^{\infty }$.

\quad

The following result relies on the standard minimax theorem (see,
e.g., \cite[Section~3.4]{CorHanLop23}).

\begin{proposition}
\label{lema1}
Let the function $f:=\sup_{k\in\N}f _{k}$ be proper,
{and $f_{\infty}$ be given by \eqref{infty}}.
Then
\begin{equation*}
\inf f=\max_{\substack{(\la_1,\la_2,\ldots)
\in \ell^1_+,\;\la_{\infty}\ge0\\\sum_{k=1}^{+\infty}
\lambda _{k}+\lambda _{\infty }=1}}\;
\inf\Big(\overline{\sum_{k\in\N}}\lambda _{k}f_{k}+\lambda _{\infty }f_{\infty}\Big).
\end{equation*}
\end{proposition}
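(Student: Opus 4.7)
The approach is to prove the two inequalities separately, with attainment of the supremum as a maximum coming from the weak-$*$ compactness of $\cl^{w^*}\Delta_0$ recalled above.

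For the easy direction $\inf f \ge \text{RHS}$, fix an admissible pair $(\la,\la_\infty)$ with $\sum_k \la_k + \la_\infty = 1$. For each $x \in X$, the bounds $f_k(x) \le f(x)$ and $f_\infty(x) = \limsup_k f_k(x) \le f(x)$ together with $\la_k,\la_\infty \ge 0$ give $\sum_{k=1}^n \la_k f_k(x) \le (\sum_{k=1}^n \la_k) f(x)$ for every $n$. Taking $\limsup_n$ produces $\overline{\sum_{k\in\N}}\la_k f_k(x) \le (1-\la_\infty) f(x)$, and consequently
\begin{equation*}
\overline{\sum_{k\in\N}}\la_k f_k(x) + \la_\infty f_\infty(x) \le (1-\la_\infty)f(x) + \la_\infty f_\infty(x) \le f(x),
\end{equation*}
the last step because $f_\infty \le f$ and $\la_\infty \ge 0$ (the case $f(x)=+\infty$ being trivial and $f(x)>-\infty$ holding by properness). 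Taking $\inf_x$ and $\sup_{(\la,\la_\infty)}$ closes this direction.

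For the reverse inequality I would apply the standard convex minimax theorem (cf.~\cite[Section~3.4]{CorHanLop23}) at finite truncations. Let $g_n := \max(f_1,\ldots,f_n,f_\infty)$; being the maximum of finitely many convex functions, $g_n$ is convex, and $g_n \uparrow f$ pointwise, so $\inf g_n \uparrow \inf f$. The bilinear function $F_n(x,\be) := \sum_{k=1}^n \be_k f_k(x) + \be_\infty f_\infty(x)$ is convex in $x$, affine and continuous in $\be$ on the compact simplex $\Delta_n := \{\be \in \R_+^{n+1} : \sum_{k=1}^n \be_k + \be_\infty = 1\}$, and satisfies $\sup_{\be\in\Delta_n} F_n(x,\be) = g_n(x)$. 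The minimax theorem therefore delivers
\begin{equation*}
\inf g_n = \max_{\be \in \Delta_n}\inf_{x\in X} F_n(x,\be),
\end{equation*}
attained at some $(\la^n,\la^n_\infty) \in \Delta_n$; in particular $\sum_{k=1}^n \la^n_k f_k(x) + \la^n_\infty f_\infty(x) \ge \inf g_n$ for every $x$.

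The final step is passing to the limit. Padding each $\la^n$ with zeros embeds it in $\ell^1_+ \subset (\ell^\infty)^*$; since these functionals lie in the $w^*$-compact set $\cl^{w^*}\Delta_0$, a subnet converges in $w^*$ to some $\hat\la^* = \la^* + \mu^* \in \cl^{w^*}\Delta_0$ with $\la^* \in \ell^1_+$ and $\mu^* \in (\ell^1)^\perp$ positive, and $\la^n_\infty \to \la^\circ_\infty$ in $[0,1]$ along the subnet. Setting $\la^*_\infty := \la^\circ_\infty + \mu^*(\mathrm{e})$ one checks $\sum_k \la^*_k + \la^*_\infty = 1$. The principal obstacle is verifying $\overline{\sum_{k\in\N}}\la^*_k f_k(x) + \la^*_\infty f_\infty(x) \ge \inf f$ for every $x$: when $(f_k(x))_k \in \ell^\infty$, $w^*$-convergence yields $\sum_k \la^n_k f_k(x) \to \sum_k \la^*_k f_k(x) + \mu^*((f_k(x))_k)$, and the standard property $\mu^*(y) \le \mu^*(\mathrm{e})\limsup_k y_k$---valid for positive $\mu^* \in (\ell^1)^\perp$ and bounded $y$, since $\mu^*$ annihilates finitely supported sequences---permits replacing $\mu^*((f_k(x))_k)$ by the larger $\mu^*(\mathrm{e}) f_\infty(x)$ to extract the required bound from $\sum_{k=1}^n \la^n_k f_k(x) + \la^n_\infty f_\infty(x) \ge \inf g_n$. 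Unbounded $(f_k(x))_k$ would be handled by an additional truncation step (replacing each $f_k$ by $\max(f_k,c)$ and letting $c \to -\infty$). Combined with the easy direction, equality holds and $(\la^*,\la^*_\infty)$ attains the maximum.
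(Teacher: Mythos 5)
Your "easy" direction is correct, and your overall strategy (minimax plus weak$^*$ compactness of $\cl^{w^*}\Delta_0$ to produce the maximizer) is in the right spirit, but the reverse inequality has a genuine gap at its very first step. You set $g_n:=\max(f_1,\ldots,f_n,f_\infty)$ and assert ``$g_n\uparrow f$ pointwise, so $\inf g_n\uparrow\inf f$''. The pointwise convergence is fine, but the inference to convergence of infima is a non sequitur: for an increasing sequence of (even convex, finite, continuous) functions one only gets $\lim_n\inf g_n\le\inf f$, and the inequality can be strict. For instance, $g_n(x)=\max\{-1,\,1-x/n,\,1-x\}$ on $\R$ is nondecreasing in $n$, convex, and converges pointwise to $g(x)=\max\{1,1-x\}$, yet $\inf g_n=-1$ for every $n$ while $\inf g=1$. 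Whether the specific structure $g_n=\max(h_n,f_\infty)$ with $f_\infty=\limsup_kf_k$ rules this out is exactly the delicate ``inf-stability'' issue that this paper is about (cf.\ the discussion after \eqref{La0qc}), and it is not addressed in your argument. Since your entire limit passage only yields $\inf\bigl(\overline{\sum}_k\la^*_kf_k+\la^*_\infty f_\infty\bigr)\ge\liminf_n\inf g_n$, the proof as written establishes the reverse inequality only against $\liminf_n\inf g_n$, not against $\inf f$.

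The paper avoids this trap by not truncating at all: it applies the minimax theorem once, on the $w^*$-compact set $\cl^{w^*}\Delta_{0}$, to the upper regularization $\psi(x,\hat\la)=\limsup_{\Delta_0\ni\la\to\hat\la}\sum_k\la_kf_k(x)$. Because $\sup_{\la\in\Delta_0}\sum_k\la_kf_k(x)=f(x)$ already (no limit in $n$ is needed), the minimax equality directly gives $\inf f\le\max_{\hat\la}\inf_x\psi(x,\hat\la)$, and the only remaining work is the upper estimate of $\psi(x,\hat\la_0+\mu)$ by $\overline{\sum}_k\la^\circ_kf_k(x)+\mu(\mathrm e)f_\infty(x)$ — which is essentially the same computation as your ``$\mu^*(y)\le\mu^*(\mathrm e)\limsup_ky_k$'' step. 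If you want to keep your two-stage architecture, you must either prove $\inf g_n\to\inf f$ in this setting or restructure so that compactness in the dual variable does the work for you. A secondary, more minor issue: your treatment of unbounded sequences $(f_k(x))_k$ by truncating at level $c$ and letting $c\to-\infty$ is only sketched and would need care, since truncation changes both $f_\infty$ and the upper sum; the paper sidesteps this by always working with finitely supported $\la\in\Delta_0$ inside $\psi$.
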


\begin{proof}
We prove the nontrivial inequality
\textquotedblleft $\leq $\textquotedblright .
For each $x\in X$ and $\hat\lambda\in (\ell^{\infty})^{\ast}$, denote
\begin{gather}
\label{L2P1}
\psi(x,\hat\lambda):= \limsup_{\Delta_{0}\ni(\la_1,\la_2,\ldots) \stackrel{w^*}{\to}\hat\lambda}\; \sum_{k=1}^{+\infty}
\la_{k}f_{k}(x),
\end{gather}
where
``$\stackrel{w^*}{\to}$'' refers to the convergence with respect to the weak$^*$ topology $w^{\ast }:=\sigma ((\ell ^{\infty })^{\ast
},\ell ^{\infty })$ and, thanks to the definition of $\Delta_{0}$, only a finite number of summands in the \RHS\ of \eqref{L2P1} can be nonzero.
Observe that, $\psi (\cdot ,\hat\lambda )$ is convex for each $\hat\lambda \in \cl^{w^{\ast}}\Delta_{0},$ and $\psi (x,\cdot )$ is
concave
and
$w^{\ast }$-{upper semicontinuous}
for each $x\in \dom f .$
The minimax
theorem (see, e.g., \cite
[Theorem~3.4.8]
{CorHanLop23})
gives
the existence of some
${\hat\lambda_0:=(\la_1^\circ,\la_2^\circ,\ldots)}
\in \ell_{+}^{1}$ and ${\mu }\in (\ell ^{1})^{\perp }$ such that $\sum_{k=1}^{+\infty}
{{\lambda}_{k}^\circ}+{\mu }({\rm e})=1$, and
\begin{equation*}
\inf_{x\in \dom f }\sup_{\hat\lambda \in \cl^{w^{\ast}}\Delta_{0}}
\psi (x,\hat\lambda )=\max_{\hat\lambda \in
\cl^{w^{\ast}}\Delta_{0}}
\inf_{x\in \dom f}\psi (x,\hat\lambda) =\inf_{x\in\dom f}\psi(x,
{\hat\lambda_0}
+{\mu}).
\end{equation*}
In view of \eqref{L2P1},
for each $x\in\dom f$, we have (see, e.g., \cite[formula (2.45)]{CorHanLop23}):
\begin{equation*}
f (x)=\sup_{(\la_1,\la_2,\ldots)\in \Delta_{0}}\;
\sum_{k=1}^{+\infty}
\lambda_{k}f_{k}(x)\leq
\sup_{\hat\la\in\cl^{w^{\ast}}\Delta_{0}}
\psi(x,\hat\lambda), 
\end{equation*}
and consequently,
\begin{equation}
\label{L2P4}
\inf_{x\in\dom f}f(x)\le\inf_{x\in\dom f}\psi(x,
{\hat\lambda_0}
+{\mu}) =\inf_{x\in\dom f} \limsup_{\Delta_{0}\ni(\la_1,\la_2,\ldots) \stackrel{w^*}{\to}
{\hat\lambda_0}
+{\mu}}\;
\sum_{k=1}^{+\infty}
\la_{k}f_{k}(x).
\end{equation}
For any $x\in\dom f$ and $n\in\N$, we have
\begin{gather*}
\limsup_{\Delta_{0}\ni(\la_1,\la_2,\ldots) \stackrel{w^*}{\to}
{\hat\lambda_0}
+{\mu}}\;
\sum_{k=1}^{+\infty}
\la_{k}f_{k}(x)
\le\sum_{k=1}^n
{\lambda_{k}^\circ}
f_{k}(x)+ \Big(
\sum_{k=n+1}^{+\infty}
{\lambda_{k}^\circ}
\Big)f(x)+{\mu}({\rm e})\sup_{k> n}f _{k}(x).
\end{gather*}
Taking limits as $n\rightarrow\infty$, we have $\sum_{k=n+1}^{+\infty}{\lambda}_{k}\to0$, and consequently,
\begin{equation}
\label{L2P6}
\limsup_{\Delta_{0}\ni(\la_1,\la_2,\ldots) \stackrel{w^*}{\to}
{\hat\lambda_0}
+{\mu}}\;
\sum_{k=1}^{+\infty}
\la_{k}f_{k}(x)
\leq\overline{\sum_{k\in\N}}
{\hat\lambda_{k}^\circ}f _{k}(x)+{\mu}({\rm e})f _{\infty}(x).
\end{equation}
Combining \eqref{L2P4} and \eqref{L2P6}, and
denoting ${\lambda }_{\infty }:={\mu }({\rm e})$, we obtain:
\begin{equation*}
\inf_X f=\inf_{\dom f }f\leq
\inf_{\dom f }\Big( \overline{\sum_{k\in\N}}
{\lambda_{k}^\circ}
f _{k}+{\lambda }_{\infty }f _{\infty}\Big).
\end{equation*}
To complete the proof, it suffices to observe that $\dom f =\dom \big(
{\overline\sum_{k\in\N}}
{\lambda_{k}^\circ}
f _{k}+\lambda_{\infty }f _{\infty }\big) $.
\qed\end{proof}
\begin{remark}
\label{R3}
The proof of Proposition~\ref{lema1} does not strongly use the fact that the initial point of the sequence $\{f_k\}$ corresponds to $k=1$.
It can be any integer.
This observation, in particular, leads to the conclusion that,
if the function $f:=\sup_{k\in\N\cup\{0\}}f _{k}$ is proper, then
\begin{equation*}
\inf f=\max_{\substack{(\la_1,\la_2,\ldots)
\in \ell^1_+,\;\la_{0}\ge0,\;\la_{\infty}\ge0\\ \sum_{k=0}^{+\infty}
\lambda _{k}+\lambda _{\infty }=1}}\;
\inf\Big(\la_{0}f_{0}+\overline{\sum_{k\in\N}} \lambda _{k}f_{k}+\lambda _{\infty }f _{\infty }\Big).
\end{equation*}
\end{remark}

The next theorem establishes the relationship between problems
\eqref{problemp}, \eqref{problemd} and \eqref{problemdmas}.

\begin{theorem}
\label{tmain}
Suppose that $v(\mathtt{P})>-\infty$. Then, for any $m=0,1\ldots$, it holds:
\begin{equation}
\label{T4-1}
(\cl v)(0)\leq v(\mathtt{D})\leq v(\mathtt{D}_{m})\leq v(\mathtt{P}).
\end{equation}
If, additionally,
the Slater condition \eqref{Sl} is satisfied,
then strong duality holds: $v(\mathtt{P})=v(\mathtt{D})=v(\mathtt{D}_{m})$, and both \eqref{problemd} and \eqref{problemdmas} have solutions, i.e.,
\begin{align}
\notag
v(\mathtt{P})=&\max_{(\la_1,\la_2,\ldots)\in \ell_{+}^{1},\; \lambda_{\infty}\in\mathbb{R}_{+}}\;
\inf\Big(f_{0}+\overline{\sum_{k\in\N}}\lambda _{k}f_{k}+\lambda _{\infty }f_{\infty}\Big)
\\
\label{T4-2}
=&\sup_{(\la_1,\la_2,\ldots)\in
{\ell^\infty_+
}}\;
\inf\Big(f_{0}+\sum_{k=1}^m\lambda _{k}f_{k}+
\sum_{k=m+1}^{+\infty}
\lambda _{k}f_{k}^+\Big).
\end{align}
Moreover, if $\lambda=(\la_1,\la_2,\ldots)\in\ell^1_+$, $\lambda_\infty\in \R_+$ is a solution of \eqref{problemd}, then
\begin{align}
\label{T4-3}
\hat\la:= (\lambda_1,\ldots, \lambda_{m},
{\lambda_{m+1}+\lambda_\infty+\alpha, \lambda_{m+2}+\lambda_\infty+\alpha},
\ldots)
\end{align}
is a solution of \eqref{problemdmas},
{where $\al=0$ if $\lambda_\infty>0$ and $\al=1$ otherwise}.
\end{theorem}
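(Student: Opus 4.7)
The plan is to decompose the proof into four routine checks that establish the chain \eqref{T4-1}, followed by a Slater-based collapse delivering the equalities \eqref{T4-2} and the attainment claims. Throughout, set $g:=\sup_{k\in\N}f_{k}$.

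First I would handle the two end inequalities in \eqref{T4-1}. The bound $v(\mathtt{D}_m)\le v(\mathtt{P})$ is weak duality: for any $\lambda\in\ell^\infty_+$ and $x$ with $f_k(x)\le0$ $(k\in\N)$ one has $\lambda_k f_k(x)\le0$ $(k\le m)$ and $\lambda_k f_k^+(x)=0$ $(k>m)$, so $L_m(x,\lambda)\le f_0(x)$. For $v(\mathtt{D})\le v(\mathtt{D}_m)$ I would take an arbitrary $(\lambda,\lambda_\infty)\in\ell^1_+\times\R_+$ and verify that the $\hat\lambda$ built via \eqref{T4-3} lies in $\ell^\infty_+$ (using $\lambda_k\to0$) and satisfies $L_m(x,\hat\lambda)\ge L(x,\lambda,\lambda_\infty)$ pointwise. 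Three ingredients enter: $f_k^+\ge f_k$ with $\lambda_k\ge0$ (to compare $\sum_{k>m}\lambda_k f_k^+$ with $\overline{\sum}_{k>m}\lambda_k f_k$); the bound $\sum_{k>m}f_k^+(x)\ge f_\infty(x)$, trivial when $f_\infty(x)\le0$ and a divergence argument on a subsequence realising the $\limsup$ when $f_\infty(x)>0$; and nonnegativity of the leftover term $\alpha\sum_{k>m}f_k^+(x)$. Infimizing and supremizing delivers the inequality.

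The central step is $(\cl v)(0)\le v(\mathtt{D})$, and the plan is to identify both sides with a common supremum through Proposition~\ref{lema1}. Since $v$ is convex and nonincreasing on $\R$, Fenchel conjugacy yields $v^*(p)=+\infty$ for $p>0$ and $v^*(-t)=\sup_x\bigl(-tg(x)-f_0(x)\bigr)$ for $t\ge0$, so
\[
(\cl v)(0)=v^{**}(0)=\sup_{t\ge0}\inf_x\bigl(f_0(x)+tg(x)\bigr).
\]
For fixed $t\ge0$, apply Proposition~\ref{lema1} to $h_k:=f_0+tf_k$ $(k\in\N)$, which gives $\sup_{k\in\N}h_k=f_0+tg$ and $h_\infty=f_0+tf_\infty$. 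Using $\sum_{k\in\N}\mu_k+\mu_\infty=1$ to absorb the $f_0$-prefactor out of the upper sum,
\[
\overline{\sum_{k\in\N}}\mu_k h_k+\mu_\infty h_\infty=f_0+t\Big(\overline{\sum_{k\in\N}}\mu_k f_k+\mu_\infty f_\infty\Big)=L(x,t\mu,t\mu_\infty).
\]
The reparametrization $(\bar\lambda,\bar\lambda_\infty):=(t\mu,t\mu_\infty)$ is a bijection between $\{t\ge0\}\times\{(\mu,\mu_\infty):\sum\mu_k+\mu_\infty=1\}$ and $\ell^1_+\times\R_+$, so the double supremum over $t$ and $(\mu,\mu_\infty)$ collapses into $v(\mathtt{D})$; in fact one obtains equality $(\cl v)(0)=v(\mathtt{D})$.

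For the Slater collapse and attainment, pick $\bar x$ with $g(\bar x)<0$; then $v(\eps)\le f_0(\bar x)<+\infty$ for all $\eps\ge g(\bar x)$, so $0\in\Int\dom v$ and the convex function $v$ is continuous at $0$, giving $v(\mathtt{P})=v(0)=(\cl v)(0)$ and collapsing \eqref{T4-1} to equalities. Continuity gives $\partial v(0)\ne\es$; choose $-t^*\in\partial v(0)$ with $t^*\ge0$, so that the Fenchel identity $v^{*}(-t^{*})=-v(0)$ realizes the outer supremum of the key step at $t=t^*$, while the maximum provided by Proposition~\ref{lema1} at this $t$ is attained at some $(\mu^*,\mu^*_\infty)$. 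Setting $(\bar\lambda,\bar\lambda_\infty):=(t^*\mu^*,t^*\mu^*_\infty)$ yields a solution of \eqref{problemd}. Feeding this pair into \eqref{T4-3} produces $\hat\lambda\in\ell^\infty_+$ with $\inf_x L_m(x,\hat\lambda)\ge\inf_x L(x,\bar\lambda,\bar\lambda_\infty)=v(\mathtt{D})=v(\mathtt{D}_m)$ by the chain step, while the reverse inequality is immediate from the definition of $v(\mathtt{D}_m)$; hence $\hat\lambda$ solves \eqref{problemdmas}. The main obstacle is the conjugacy/infinite-sum manipulation in the key step: the delicate point is the pointwise identity $\overline{\sum}_k\mu_k(f_0+tf_k)=(\sum_k\mu_k)f_0+t\,\overline{\sum}_k\mu_k f_k$, which uses $\mu\in\ell^1$ so that $\sum_{k\in S}\mu_k$ converges monotonically to $\sum_k\mu_k$ as $S\uparrow\N$, the two contributions separating cleanly under the paper's $+\infty$ conventions; properness of $f_0+tg$, required by Proposition~\ref{lema1}, is guaranteed by $C\ne\es$ and the hypothesis $v(\mathtt{P})>-\infty$.
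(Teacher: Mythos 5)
Your argument is correct and reaches all the assertions of the theorem, but the route through the central inequality is genuinely different from the paper's. The paper never passes through the biconjugate of the value function: it normalises $v(\mathtt{P})=0$, observes that then $\inf_X\sup_{k\in\N\cup\{0\}}f_k=0$, applies Proposition~\ref{lema1} (via Remark~\ref{R3}) to the family that includes $f_0$ itself, and uses the Slater condition to guarantee that the multiplier $\lambda_0$ attached to $f_0$ is strictly positive so that one can divide through; the inequality $(\cl v)(0)\le v(\mathtt{D})$ is then obtained a posteriori by running this argument on the perturbed problems \eqref{pepsilon} with $\eps>0$, which automatically satisfy Slater. You instead write $(\cl v)(0)=v^{**}(0)=\sup_{t\ge0}\inf(f_0+tg)$ and apply Proposition~\ref{lema1} to $h_k:=f_0+tf_k$ for each fixed $t$, recovering dual variables by the rescaling $(\lambda,\lambda_\infty)=(t\mu,t\mu_\infty)$; Slater then enters only through continuity and subdifferentiability of $v$ at $0$. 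Your version buys the unconditional identity $(\cl v)(0)=v(\mathtt{D})$ (the paper records only the inequality) and cleanly isolates the attainment of the outer supremum as $\partial v(0)\ne\es$, at the price of extra bookkeeping that you correctly flag but should carry out: the identity $\overline{\sum}_{k}\mu_k(f_0+tf_k)=(\sum_{k}\mu_k)f_0+t\,\overline{\sum}_{k}\mu_kf_k$ under the convention $0\cdot(+\infty)=+\infty$, the degenerate parameter $t=0$, and properness of $v$ (which does follow from $v(0)>-\infty$, convexity and $0\in\Int\dom v$). The chain $v(\mathtt{D})\le v(\mathtt{D}_m)\le v(\mathtt{P})$ and the transfer of solutions via \eqref{T4-3} are handled essentially as in the paper, with your divergence argument for $\sum_{k>m}f_k^+(x)\ge f_\infty(x)$ replacing the paper's choice of a single index $j$ with $f_j(x)>f_\infty(x)-\eps/\lambda_\infty$.
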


\if{
\begin{remark}
Observe that,
\red{when $\lambda_\infty>0$, condition \eqref{gh} holds with $\alpha_{m+1}=\alpha_{m+2}=\ldots=0$.}
\end{remark}
}\fi
\begin{proof}
The last inequality in \eqref{T4-1} holds true trivially.
We now prove the second inequality.
Let $x\in X$, $\la=(\la_1,\la_2,\ldots)\in\ell^1_+$, $\la_{\infty}\ge 0$,
{$\al=0$ if $\lambda_\infty>0$ and $\al=1$ otherwise},
and $\hat\la$ be given by \eqref{T4-3}.
Obviously, $\hat\la\in\ell^\infty_+$.
{Let $x\in\dom f_\infty$.
If $\la_{\infty}=0$},
then
\begin{align*}
L(x,\lambda,\lambda_{\infty})
&\le f_{0}(x)
+\sum_{k=1}^m\lambda_{k}f_{k}(x)
+\sum_{k=m+1}^{+\infty}(\lambda_{k}+\alpha
)f_{k}^+(x) =L_m(x,\hat\lambda).
\end{align*}
If $\la_{\infty}>0$,
then, for each $\varepsilon > 0$, there is a
$j > m$ such that $f_{\infty}(x)<f_{j}(x)+\varepsilon/\lambda_\infty$, and consequently,
$\la_{j}f_{j}(x)+\la_{\infty}f_{\infty}(x) {<(\la_{j}+\la_{\infty})f_{j}(x)+\varepsilon}.
$
Thus,
\begin{align*}
L(x,\lambda,\lambda_{\infty})
&<f_{0}(x)+
\sum_{k=1}^m \lambda_{k}f_{k}(x)
{+\overline{\sum}_{k>m,\, k\ne j} \lambda_{k}f_{k}(x)}
+{(\la_{j}+\la_{\infty})f_{j}(x)+\varepsilon}\\
&\leq
f_{0}(x)
+\sum_{k=1}^m \lambda_{k}f_{k}(x)
+\sum_{k=m+1}^{+\infty} (\la_{k}+
\la_{\infty})f_{k}^+(x)+\varepsilon
=L_m(x,\hat\lambda)+\varepsilon.
\end{align*}
Hence,
{letting $\varepsilon\downarrow 0$, we obtain
$L(x,\lambda,\lambda_{\infty})\le L_m(x,\hat\lambda)$}.
If $x\notin\dom f_\infty$, then $\limsup_{k\to+\infty}f_k(x)=+\infty$, and consequently,
\begin{align*}
L_m(x,\hat\lambda)=f_{0}(x)+\sum_{k=1} ^m\lambda_{k}f_{k}(x)+\sum_{k=m+1}^{+\infty} (\lambda_{k}+\alpha
+\la_{\infty})f_{k}^+(x)=+\infty.
\end{align*}
Thus, in all three cases we have $L(x,\lambda,\lambda_{\infty})\le L_m(x,\hat\lambda)$.
It follows that $v(\mathtt{D})\leq v(\mathtt{D}_{m})$.

We now
proceed to showing the inequality $v(\mathtt{P})\le v(\mathtt{D})$
under condition (\ref{Sl}).
Assume without loss of generality
that $v(\mathtt{P})=0.$
Thus, $\inf_{X}\sup_{k\in\N\cup\{0\}}f_{k}=0,$
the function $f:=\sup_{k\in\N\cup\{0\}}f _{k}$ is proper, and applying
Proposition~\ref{lema1}
together with Remark~\ref{R3},
we find some
$\lambda =(\la_1,\la_2,\ldots)\in \ell_{+}^{1}$, ${\lambda }_{0}\geq 0$ and ${\lambda }_{\infty}\geq 0$ such that ${\sum_{k=0}^{+\infty}
\lambda _{k}+\lambda _{\infty }=1}$
and
\sloppy
\begin{equation*}
\inf\Big( {\lambda }_{0}f_{0}+ \overline{\sum_{k\in\N}}~{\lambda}_{k}f_{k}+{\lambda}_{\infty}f_{\infty}\Big) =0.  
\end{equation*}
In view of condition (\ref{Sl}), we have $\lambda_{0}>0$ and, so, dividing by $\lambda_{0}$, we obtain
\begin{equation*}
\inf\Big( f_{0}+\overline{\sum_{k\in\N}}~{\lambda }_{0}^{-1}{\lambda}_{k}f_{k}+{\lambda }_{0}^{-1}{\lambda}_{\infty}f_{\infty}\Big) =0.  
\end{equation*}
Hence, $v(\mathtt{D})\ge0=v(\mathtt{P})$, and consequently, the last two inequalities in \eqref{T4-1} hold as equalities,
{and both \eqref{problemd} and \eqref{problemdmas} have solutions}.
The representations in \eqref{T4-2} follow from the definitions, while the last assertion of the theorem is a consequence of the estimates established above.

To establish the first inequality in \eqref{T4-1}
without assumption \eqref{Sl},
we fix {an}
$\varepsilon >0$ and apply the
fact established
above to the perturbed problem
(\ref{pepsilon}), which obviously
satisfies
the Slater condition
({recall}
that the original problem
\eqref{problemp}
is assumed
consistent).
We find some ${\lambda }^{\varepsilon }
=(\la_1^\eps,\la_2^\eps,\ldots)
\in \ell_{+}^{1}$ and ${\lambda }_{\infty
}^{\varepsilon }\geq 0$ such that
\begin{equation*}
v(\varepsilon )=v(\mathtt{P}_{\varepsilon })
=\inf\Big(f_{0}+
\overline{\sum_{k\in\N}} {\lambda}_{k}^{\varepsilon} (f_{k}-\eps)+{\lambda }_{\infty}^{\varepsilon} (f_{\infty}-\eps)\Big)\leq v(\mathtt{D}).
\end{equation*}
Since $v(\varepsilon )=+\infty $ for all $\eps<0$,
we obtain $(\cl v)(0)\leq v(\mathtt{D})$.
\qed\end{proof}

\begin{corollary}
\label{C5}
Let $\bx\in X$ be a solution of problem \eqref{problemp}, and Slater condition \eqref{Sl} hold.
\begin{enumerate}
\item
\label{C5.1}
There exists a solution $(\lambda_{1},\lambda _{2},\ldots )\in \ell_{+}^{1}$, $\lambda _{\infty }\in\R_+$ of problem \eqref{problemd}, i.e.,
\begin{gather}
\label{C5-1}
f_0(\bx)=\inf\Big(f_0
+
{\overline{\sum_{k\in\N}}}
\la_k f_k
+\lambda _{\infty }f_{\infty}\Big).
\end{gather}
Moreover, for any solution of problem \eqref{problemd}, the infimum in \eqref{C5-1} is attained at $\bx$, and the complementary slackness conditions hold:
\begin{gather}
\label{C5-2}
\lambda_{k}f_{k}(\bx)=0\quad (k\in\N\cup \{\infty\}).
\end{gather}
\item
\label{C5.2}
For any $m=0,1,\ldots$,
there exists a solution $(\lambda_{1},\lambda _{2},\ldots )\in \ell_{+}^{\infty}$ of problem \eqref{problemdmas}, i.e.,
\begin{gather}
\label{C5-3}
f_0(\bx)=\inf\Big(f_{0}+\sum_{k=1}^m\lambda _{k}f_{k}+
\sum_{k=m+1}^{+\infty}\lambda _{k}f_{k}^+\Big).
\end{gather}
Moreover, for any solution of problem \eqref{problemdmas}, the infimum in \eqref{C5-3} is attained at $\bx$, and the (partial) complementary slackness conditions hold:
\begin{gather}
\label{C5-4}
\lambda_{k}f_{k}(\bx)=0\quad (k=1,\ldots,m).
\end{gather}
\end{enumerate}
\end{corollary}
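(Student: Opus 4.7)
The plan is to derive Corollary~\ref{C5} as a direct consequence of the strong duality established in Theorem~\ref{tmain}, via a standard saddle-point/complementary-slackness chaining argument applied separately to the two dual problems.

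For part~(\ref{C5.1}), I first invoke Theorem~\ref{tmain} under the Slater condition \eqref{Sl} to obtain the existence of a solution $(\lambda,\lambda_\infty)\in\ell^1_+\times\R_+$ of \eqref{problemd} together with the equality $v(\mathtt{P})=v(\mathtt{D})$. Since $\bx$ solves \eqref{problemp}, $f_0(\bx)=v(\mathtt{P})$. The key observation is that feasibility of $\bx$ yields $f_k(\bx)\le 0$ for every $k\in\N$, whence \eqref{infty} gives $f_\infty(\bx)\le 0$. Because all multipliers are nonnegative, each $\lambda_k f_k(\bx)$ and the term $\lambda_\infty f_\infty(\bx)$ is nonpositive; by \eqref{deflimsup}, the upper sum in \eqref{Lag} is then $\le 0$, and consequently
\begin{equation*}
L(\bx,\lambda,\lambda_\infty)\le f_0(\bx)=v(\mathtt{P})=v(\mathtt{D})=\inf_{x\in X} L(x,\lambda,\lambda_\infty)\le L(\bx,\lambda,\lambda_\infty).
\end{equation*}
Equality throughout yields both \eqref{C5-1} (the infimum is attained at $\bx$) and $\overline\sum_{k\in\N}\lambda_k f_k(\bx)+\lambda_\infty f_\infty(\bx)=0$. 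Since this is a sum of nonpositive numbers, each term must vanish, giving the complementary slackness \eqref{C5-2}.

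For part~(\ref{C5.2}), the scheme is entirely analogous, with $L_m$ replacing $L$. Theorem~\ref{tmain} supplies a solution $\lambda\in\ell^\infty_+$ of \eqref{problemdmas} and the identity $v(\mathtt{P})=v(\mathtt{D}_m)$. For $k=1,\ldots,m$, nonnegativity of $\lambda_k$ together with $f_k(\bx)\le 0$ gives $\lambda_k f_k(\bx)\le 0$; for $k>m$ the summand $\lambda_k f_k^+(\bx)$ vanishes since $f_k^+(\bx)=0$. The same chain of inequalities then forces attainment of the infimum in \eqref{C5-3} at $\bx$, together with $\sum_{k=1}^m\lambda_k f_k(\bx)=0$, from which \eqref{C5-4} follows as a sum of nonpositive terms equaling zero.

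No substantial obstacle is expected: all the deep work (strong duality, dual attainment, the compactification via $f_\infty$, and the transfer between \eqref{problemd} and \eqref{problemdmas}) is already carried out in Theorem~\ref{tmain}, so the corollary reduces to routine Lagrangian bookkeeping. The only mild point to verify is the well-definedness of $L(\bx,\lambda,\lambda_\infty)$: since $\bx\in C\subset\dom f_0$ and $f_k(\bx)\in[-\infty,0]$ for every $k\in\N\cup\{\infty\}$, the upper sum of nonpositive summands is well defined and nonpositive, so no pathology arises from the convention $0\cdot(+\infty)=+\infty$.
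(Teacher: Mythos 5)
Your proposal is correct and follows essentially the same route as the paper: part one of each assertion is read off from the strong duality/attainment in Theorem~\ref{tmain}, and the attainment at $\bx$ together with complementary slackness then follows from feasibility of $\bx$ (so that $f_k(\bx)\le0$, $f_\infty(\bx)\le0$, $f_k^+(\bx)=0$ for $k>m$) by comparing $f_0(\bx)$ with the value of the Lagrangian at $\bx$ and forcing a sum of nonpositive terms to vanish. Your explicit saddle-point chain of inequalities and the remark on well-definedness under the convention $0\cdot(+\infty)=+\infty$ merely spell out details the paper leaves implicit.
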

\begin{proof}
The first parts of both assertions are direct consequences of representations \eqref{T4-2} in Theorem \ref{tmain}.
Since $\bx\in X$ is a solution of problem \eqref{problemp}, we have $f_k(\bx)\le0$ for all $k\in\N$, and consequently, $f_\infty(\bx)\le0$.
If $(\lambda_{1},\lambda _{2},\ldots )\in \ell_{+}^{1}$, $\lambda _{\infty }\in\R_+$ is a solution of problem \eqref{problemd}, then, by \eqref{C5-1},
\begin{gather*}
f_0(\bx)\leq f_0(\bx)
+\overline{\sum_{k\in\N}}\la_k f_k(\bx)
+\lambda _{\infty }f_{\infty}(\bx),
\end{gather*}
and consequently, conditions \eqref{C5-2} are satisfied, and the infimum in \eqref{C5-1} is attained at $\bx$.
Similarly, if for some $m=0,1,\ldots$,
$(\lambda_{1},\lambda _{2},\ldots )\in \ell_{+}^{\infty}$ is a solution of problem \eqref{problemdmas}, then, by \eqref{C5-3}, taking into account that $f_{k}^+(\bx)=0$ for all $k=m+1,\ldots$,
\begin{gather*}
f_0(\bx)\leq f_0(\bx)
+\sum_{k=1}^m\lambda _{k}f_{k}(\bx),
\end{gather*}
and consequently, conditions \eqref{C5-4} are satisfied, and the infimum in \eqref{C5-3} is attained at~$\bx$.
\qed\end{proof}

\section{Optimality conditions}
\label{S4}

In this
section, we derive general
optimality conditions/multiplier
rules for the infinite convex optimization problem~\eqref{problemp}.
\if{
\AK{24/06/24.
I have replaced application of Theorem 4.2 (sum rule) by that of the main Theorem 4.1 (optimality conditions).
I am explicitly assuming the ``singular'' function $f_\infty$ to be lsc.
(Can its domain not be closed?)
This does not look nice, but seems to simplify everything.
Some sufficient conditions for the lower semicontinuity mentioned in Abderrahim's email can be discussed in a remark.

Have I ruined your theory?}

\AH{22/07/24. We have removed the conclusion on the complementary slackness in the following theorem, since it is already implicit in the definition of the solution $\bx$ when we suppose the existence of a solution\ $\lambda =(\lambda_{1},\lambda _{2},\ldots )\in \ell_{+}^{1}$ and $\lambda _{\infty }\in\R_+$ of problem $(\ref{problemd})$.}
}\fi

\begin{theorem}
\label{optimality}
Let $f_k$ $(k\in\N\cup\{0\})$ be convex, and $\bx\in X$ be a solution of problem \eqref{problemp}.
Suppose that
Slater condition \eqref{Sl} holds,
and there exists a solution\ $\lambda =(\lambda_{1},\lambda _{2},\ldots )\in \ell_{+}^{1}$, $\lambda _{\infty }\in\R_+$ of problem \eqref{problemd} such that
\begin{enumerate}
\item
\label{T5.1}
the functions $f_{0},\;\lambda_{k}f_{k}\,(k\in \mathbb{N}\cup\{\infty\})$ are lower semicontinuous and bounded from below on a \nbh\ of $\bx$;
\item
\label{T5.2}
the collection $\{f_{0},\;\lambda_{k}f_{k}\,(k\in \mathbb{N}\cup\{\infty\})\}$ is uniformly lower semicontinuous on a \nbh\ of $\bx$.
\end{enumerate}
\if{
\AK{30/07/24.
I think it would not be a big loss of generality to assume in (i) the functions $f_{k}\,(k\in \mathbb{N}\cup\{0,\infty\})$ (without $\la_k$) to be \lsc\ and bounded from below on a \nbh\ of $\bx$.
Then the existence of $\la$ could be moved into (ii).

The same simplification can be done also in Theorem~\ref{optimalityb}.
We could go even a bit further and assume the functions to be \lsc\ on the whole space.
Then condition (i) above gets shorter, while in Theorem~\ref{optimalityb} it can be dropped completely.
}
\AH{The problem is the case when some $\la_k$ is zero: In such a case,
$\la_kf_{k}=I_{\dom f_k}$ is not necessarily lower semicontinuous. However, we could do this in Theorem~\ref{optimalityb} for those functions $f_k$ for $k>m$ because we could suppose there that the corresponding $\la_k$'s are positive.
}
}\fi
Then
{conditions \eqref{C5-2} are satisfied, and},
for any $\varepsilon>0$ and $M>0$,
there exist a number
$n\in\N$, and points $x_{k}\in B_{\varepsilon}(\bar{x})$ $(k=0,\ldots,n)$ and $x_{\infty}\in B_{\varepsilon}(\bar{x})$ such that $n>M$ and
\begin{gather}
\notag
f_{0}(x_0)+\sum_{k=1}^n \la_kf_{k}(x_k)+\la_\infty f_\infty(x_\infty)\le f_{0}(\bx),
\\
\label{T5-2}
0\in\partial f_{0}(x_0)+\sum_{k=1}^n{\lambda}_{k}\partial f_{k}(x_{k})+{\lambda}
_{\infty }\partial f_{\infty}(x_\infty)+\varepsilon\B^*
\end{gather}
with the convention
${\lambda}_{k}\partial f_{k}(x_{k})=N_{\dom f_{k}}(x_k)$ when ${\lambda}_{k}=0$ $(k\in \mathbb{N}\cup\{\infty\}).$
\end{theorem}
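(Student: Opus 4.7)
My plan is to invoke Corollary~\ref{C5}\eqref{C5.1} to identify $\bar x$ as an unconstrained minimizer of the Lagrangian and to obtain the complementary slackness identities, and then to apply Theorem~\ref{T3.1} to the augmented collection $\{g_t\}_{t\in T}$ indexed by $T:=\{0\}\cup\N\cup\{\infty\}$, where $g_0:=f_0$, $g_k:=\lambda_k f_k$ for $k\in\N$, and $g_\infty:=\lambda_\infty f_\infty$. Since $\bar x$ solves \eqref{problemp}, Slater holds and $(\lambda,\lambda_\infty)$ is a dual solution, Corollary~\ref{C5}\eqref{C5.1} instantly supplies the complementary slackness \eqref{C5-2} and asserts that the infimum in \eqref{C5-1} is attained at $\bar x$; equivalently, $\bar x$ is a global minimizer of $\overline{\sum_{t\in T}}g_t=L(\cdot,\lambda,\lambda_\infty)$. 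Hypotheses~\eqref{T5.1}--\eqref{T5.2} furnish verbatim the convexity, lower semicontinuity, boundedness from below, and uniform lower semicontinuity required by Theorem~\ref{T3.1} for $\{g_t\}_{t\in T}$ at $\bar x$.

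Given $\varepsilon>0$ and $M>0$, I fix an integer $n_0>M$, set $S_0:=\{0,1,\ldots,n_0,\infty\}\in\mathcal{F}(T)$, and apply Theorem~\ref{T3.1} to produce a finite $S\supset S_0$ and points $x_t\in B_\varepsilon(\bar x)$ ($t\in S$) satisfying $\sum_{t\in S}(g_t(x_t)-g_t(\bar x))\le 0$ and $0\in\sum_{t\in S}\partial g_t(x_t)+\varepsilon\B^*$. Substituting $g_0(\bar x)=f_0(\bar x)$ together with the complementary slackness identity $g_t(\bar x)=0$ for $t\ne 0$, and using the subdifferential formulas $\partial(\lambda_k f_k)(x)=\lambda_k\partial f_k(x)$ when $\lambda_k>0$ and $\partial(\lambda_k f_k)(x)=N_{\dom f_k}(x)$ when $\lambda_k=0$ (which is precisely the stated convention), these conclusions translate to the target primal inequality and the inclusion \eqref{T5-2}, summed over $k\in S\cap\N$.

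The remaining task is to re-index the sum over $\{1,\ldots,n\}$ with $n:=\max(S\cap\N)\ge n_0>M$: I fill each gap index $k\in\{1,\ldots,n\}\setminus S$ by declaring $x_k:=\bar x$. On the primal side every padded term satisfies $\lambda_k f_k(\bar x)=0$ (by complementary slackness if $\lambda_k>0$, by $0\cdot r=0$ for the finite value $r=f_k(\bar x)$ otherwise), so the primal inequality survives unchanged.

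The main obstacle I foresee lies on the subdifferential side: gap indices with $\lambda_k=0$ are harmless because $N_{\dom f_k}(\bar x)\ni 0$, but a gap index with $\lambda_k>0$ injects $\lambda_k\partial f_k(\bar x)$ into the sum, a set which in general does not contain zero. I plan to resolve this either by enlarging $S_0$ from the outset to contain every positive-$\lambda_k$ index that can fall in $\{1,\ldots,n\}$ (which keeps $S_0$ finite because $\lambda\in\ell^1_+$ forces only finitely many coordinates to carry appreciable mass above any preselected threshold), or by invoking the local Lipschitz continuity of $\lambda_k f_k$ near $\bar x$ guaranteed by convexity and assumption~\eqref{T5.1} to bound the aggregate residual contribution by $c\sum_{k>n_0}\lambda_k$ for a uniform constant $c$, and absorb this tail into $\varepsilon\B^*$ by choosing $n_0$ sufficiently large.
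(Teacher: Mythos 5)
Your proposal follows the paper's own route exactly: Corollary~\ref{C5}\,\ref{C5.1} gives that $\bx$ is a global minimizer of $f_0+\overline{\sum}_{k\in\N}\la_kf_k+\la_\infty f_\infty$ together with the complementary slackness conditions \eqref{C5-2}, and Theorem~\ref{T3.1}, applied to the collection $\{f_0,\ \la_kf_k\ (k\in\N),\ \la_\infty f_\infty\}$ indexed by $\{0\}\cup\N\cup\{\infty\}$ with $S_0=\{0,1,\ldots,n_0,\infty\}$, $n_0>M$, delivers the fuzzy conditions. Up to and including the translation of the conclusions of Theorem~\ref{T3.1} into the two displayed conditions (using complementary slackness on the primal side and the convention $\la_k\sd f_k(x_k)=N_{\dom f_k}(x_k)$ for $\la_k=0$), everything is correct and is precisely what the paper does; its proof consists of these two steps and nothing more.

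The re-indexing issue you flag at the end is genuine --- the finite set $S$ produced by Theorem~\ref{T3.1} need not be a consecutive block, and the paper passes over this silently --- but neither of your proposed repairs is sound as written. The first is circular: $n=\max(S\cap\N)$ is known only \emph{after} Theorem~\ref{T3.1} has been applied to a chosen $S_0$, and since $\la\in\ell^1_+$ may have infinitely many strictly positive coordinates, no finite $S_0$ fixed in advance can contain every index $k\le n$ with $\la_k>0$; thresholding by ``appreciable mass'' does not help, because a gap index with arbitrarily small $\la_k>0$ still injects the set $\la_k\sd f_k(\bx)$, which may be empty (making the Minkowski sum empty and the inclusion \eqref{T5-2} vacuously false) or unbounded with all elements far from the origin. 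The second repair rests on local Lipschitz continuity of $f_k$ near $\bx$, which does not follow from hypothesis~\ref{T5.1}: a proper convex function that is merely \lsc\ and bounded from below near $\bx$ may have $\bx$ on the boundary of its domain, so no uniform bound on subgradient norms at $\bx$ is available, and the tail estimate $c\sum_{k>n_0}\la_k$ cannot be justified. What your argument (and the paper's) actually establishes is the statement with the sums taken over the set $S$ from Theorem~\ref{T3.1}, with the $\la_k=0$ gaps filled harmlessly via $0\in N_{\dom f_k}(\bx)$; upgrading this to the consecutive range $\{1,\ldots,n\}$ for gap indices with $\la_k>0$ requires an argument that neither you nor the paper supplies.
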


\begin{proof}
Choose
a solution\ $\lambda =(\lambda_{1},\lambda _{2},\ldots )\in \ell_{+}^{1}$, $\lambda _{\infty }\in\R_+$ of problem $(\ref{problemd})$ satisfying conditions \ref{T5.1} and \ref{T5.2}.
{By Corollary~\ref{C5}\,\ref{C5.1},}
$\bar{x}$ is a (global)
minimum on $X$ of the function
$f_{0}+
\overline{\sum}_{k\in\N}
\lambda_{k}f_{k}+\lambda_{\infty}f_{\infty}$,
and conditions \eqref{C5-2} are satisfied.
The remaining conclusions follow by applying
Theorem~\ref{T3.1} and Remark~\ref{R3.1}.
\qed\end{proof}

\begin{remark}
\label{R6}
Assumption \ref{T5.1} in Theorem~\ref{optimality} can be
rewritten equivalently in the following way:
``\emph{the functions $f_{k}\,(k\in \mathbb{N}\cup\{0,\infty\})$ are lower semicontinuous and bounded from below on a \nbh\ of $\bx$, and the sets $\dom f_{k}$ are closed for all $k\in \mathbb{N}\cup\{\infty\}$ such that $\la_k=0$}''.

The closedness of the domains in the above condition is needed to ensure the lower semicontinuity of the functions $\lambda_{k}f_{k}\,(k\in \mathbb{N}\cup\{\infty\})$ when $\lambda_{k}=0$ for some $k\in \mathbb{N}\cup\{\infty\}$.
Note that, in view of our standing convention $0\cdot (+\infty )=+\infty$, $\lambda_{k}f_{k}$ equals the indicator function of $\dom f_{k}$ when $\lambda_{k}=0$.
{If the sets $\dom f_{k}$ are assumed closed for all $k\in \mathbb{N}\cup\{\infty\}$,}
then the assumption of the existence of a solution of problem $(\ref{problemd})$ can be moved into condition \ref{T5.2}.

If a function is \lsc\ on some set, it is obviously bounded from below on a neighbourhood of any point in this set.
Note that
{condition \ref{T5.1} requires}
the boundedness of all the functions on some \emph{common} neighbourhood of $\bx$.
\end{remark}
\if{
\AK{31/07/24.
Theorems \ref{optimality} and \ref{optimalityb} represent a new (important?) phenomenon in the world of fuzzy optimality conditions, even in finite optimization, which is not observed in Theorem 4.1 and the corresponding results in \cite{FabKruMeh24}: the multipliers are related to the solution $\bx$ and not the `fuzzy' points $x_k$.
This is likely to be specific to the convex case.
This phenomenon should be commented on.
}
\AH{I agree, it would be nice to comment on that; up to my knowledge, it was not observed in the convex setting unless there are a finite number of constraints. I think some similar phenomena should also hold for the non-convex case under semicontinuity/uniform lower firmness ...}
}\fi
\begin{theorem}
\label{optimalityb}
Let the functions $f_k$ $(k\in\N\cup\{0\})$ be convex, and $\bx\in X$ be a solution of problem \eqref{problemp}.
Suppose that
Slater condition \eqref{Sl} holds,
and there exist a number $m\in\N\cup\{0\}$ and a solution\ $\lambda =(\lambda_{1},\lambda _{2},\ldots )\in \ell_{+}^{\infty}$ of problem {\rm (\ref{problemdmas})} such that
\begin{enumerate}
\item
\label{T5.3.1}
the functions $f_0$, $\la_k f_k$ $(k=1,\ldots,m)$, $\la_k f_k^+$ $(k=m+1,\ldots)$ are lower semicontinuous on a \nbh\ of $\bx$;
\item
\label{T5.3.2}
the collection $\{f_{0},\;\lambda_{k}f_{k}\ (k=1,\ldots,m),\;\lambda_{k}f_{k}^{+}\ (k=m+1,\ldots)\}$ is uniformly lower semicontinuous on a \nbh\ of $\bx$.
\end{enumerate}
Then,
{conditions \eqref{C5-4} are satisfied, and},
for any $\varepsilon >0$ and $M>m$, there exist a number $n\in\N$
and points $x_{k}\in B_{\varepsilon }(\bar{x})$ $(k=0,\ldots ,n)$\ such that $n>M$ and
\begin{gather}
\notag
f_{0}(x_{0})
+\sum_{k=1}^m \lambda_{k}f_{k}(x_{k})
+\sum_{k=m+1}^n \lambda_{k}f_{k}^+(x_{k})
\leq f_{0}(\bar{x}),
\\
\label{T7-2}
0\in \partial f_{0}(x_{0})
+\sum_{k=1}^m\lambda
_{k}\partial f_{k}(x_{k})
+\sum_{k=m+1}^n\lambda
_{k}\partial f_{k}^+(x_{k}) +\varepsilon\mathbb{B}^{\ast}
\end{gather}
with the
convention
${\lambda}_{k}\partial f_{k}(x_{k})={\lambda}_{k}\partial f_{k}^+(x_{k})=N_{\dom f_{k}}(x_k)$ when ${\lambda}_{k}=0$ $(k\in\N)$.
\end{theorem}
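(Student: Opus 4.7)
The plan is to parallel Theorem~\ref{optimality}, swapping the Lagrangian \eqref{Lag} for its counterpart \eqref{Lag2}. First, I would invoke Corollary~\ref{C5}\ref{C5.2}: under the Slater condition \eqref{Sl}, the given solution $\lambda\in\ell^{\infty}_{+}$ of \eqref{problemdmas} forces $\bx$ to be a global minimiser over $X$ of
\[
h:=f_{0}+\sum_{k=1}^{m}\lambda_{k}f_{k}+\overline{\sum_{k=m+1}^{+\infty}}\lambda_{k}f_{k}^{+},
\]
and it immediately delivers the partial complementary slackness \eqref{C5-4}. Feasibility of $\bx$ also yields $f_{k}(\bx)\le 0$ for every $k\in\N$, so $\lambda_{k}f_{k}^{+}(\bx)=0$ for every $k>m$ as well.

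Next, I would set $g_{0}:=f_{0}$, $g_{k}:=\lambda_{k}f_{k}$ for $1\le k\le m$, and $g_{k}:=\lambda_{k}f_{k}^{+}$ for $k>m$, and apply Theorem~\ref{T3.1} to the collection $\{g_{k}\}_{k\in\N\cup\{0\}}$ with $T=\N\cup\{0\}$. Hypotheses~\ref{T5.3.1}--\ref{T5.3.2} (combined with the standard fact that lower semicontinuity at $\bx$ forces local boundedness from below whenever the value is finite) guarantee that on a common $B_{\delta}(\bx)$ each $g_{k}$ is convex, lower semicontinuous, and bounded below, while $\{g_{k}\}$ is uniformly lower semicontinuous; $\bx$ minimises $\overline{\sum}_{k}g_{k}=h$ by the first step, so the hypotheses of Theorem~\ref{T3.1} are met.

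Given $\eps>0$ and $M>m$, I would then apply Theorem~\ref{T3.1} with $S_{0}:=\{0,1,\ldots,M\}$, obtaining a finite $S\supset S_{0}$ and points $x_{k}\in B_{\eps}(\bx)$ ($k\in S$) that satisfy $\sum_{k\in S}(g_{k}(x_{k})-g_{k}(\bx))\le 0$ and $0\in\sum_{k\in S}\partial g_{k}(x_{k})+\eps\B^{*}$. Set $n:=\max S\ge M>m$ and pad the gap by defining $x_{k}:=\bx$ for every $k\in\{m+1,\ldots,n\}\setminus S$. Because $f_{k}^{+}$ achieves its minimum at $\bx$ (where its value is $0$), both $\lambda_{k}f_{k}^{+}(\bx)=0$ and $0\in\partial f_{k}^{+}(\bx)$ hold, so this padding leaves both the sum inequality and the subdifferential inclusion intact. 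The remaining translation is routine: when $\lambda_{k}>0$, the scaling rule gives $\partial(\lambda_{k}f_{k})=\lambda_{k}\partial f_{k}$ (and similarly for $f_{k}^{+}$); when $\lambda_{k}=0$, one has $g_{k}=i_{\dom f_{k}}$ and $\partial g_{k}(x_{k})=N_{\dom f_{k}}(x_{k})$, matching the stated convention. Complementary slackness collapses $\sum_{k\in S}g_{k}(\bx)$ to $f_{0}(\bx)$, yielding the sum bound and \eqref{T7-2}.

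I expect the main technical obstacle to be reconciling the sparse finite index set $S$ returned by Theorem~\ref{T3.1} with the contiguous range $\{0,1,\ldots,n\}$ demanded by the statement; the padding device works only because $f_{k}^{+}$ attains its minimum at $\bx$, which in turn uses feasibility, and because pads ``disappear'' from a Minkowski sum that already contains $0$. A secondary delicate point is the handling of indices with $\lambda_{k}=0$: lower semicontinuity of $\lambda_{k}f_{k}=i_{\dom f_{k}}$ forces $\dom f_{k}$ to be closed (cf.~Remark~\ref{R6}), which is exactly what makes the stated convention $\lambda_{k}\partial f_{k}(x_{k})=N_{\dom f_{k}}(x_{k})$ consistent with the steps above.
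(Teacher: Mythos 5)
Your proposal is correct and follows essentially the same route as the paper: boundedness from below via nonnegativity of the $\lambda_k f_k^+$ and lower semicontinuity of the finitely many remaining functions, Corollary~\ref{C5}\,\ref{C5.2} to get that $\bx$ globally minimises $L_m(\cdot,\lambda)$ together with \eqref{C5-4}, and then Theorem~\ref{T3.1}. The only difference is that you spell out the padding of the index set $S$ to a contiguous range $\{0,\ldots,n\}$ using $0\in\partial f_k^+(\bx)$, a detail the paper leaves implicit.
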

\if{
\AK{31/07/24.
I think the ``$\eps$-slackness'' conditions can be dropped, and the proof can be shortened.
In fact, the proof is basically the same as that of Theorem~\ref{optimality}.
}
\AH{I removed the ``$\eps$-slackness'' conditions.}
}\fi

\begin{proof}
Since $f_k$ $(k=1,\ldots,m)$ are \lsc, $f_k^+\ge0$ $(k=m+1,\ldots)$ and $\lambda\in \ell_{+}^{\infty}$, the functions $f_0$, $\la_k f_k$ $(k=1,\ldots,m)$, $\la_k f_k^+$ $(k=m+1,\ldots)$ are bounded from below on a \nbh\ of $\bx$.
Choose
a solution\ $\lambda =(\lambda_{1},\lambda _{2},\ldots )\in \ell_{+}^{\infty}$ of problem \eqref{problemdmas} satisfying conditions \ref{T5.3.1} and \ref{T5.3.2}.
By Corollary~\ref{C5}\,\ref{C5.2},
$\bar{x}$ is a (global)
minimum on $X$ of the function
$f_{0}+{\sum}_{k=1}^{m}
\lambda_{k}f_{k}+{\sum}_{k=m+1}^{+\infty}
\lambda_{k}f_{k}^+$
and conditions \eqref{C5-4} are satisfied.
The remaining conclusions follow by applying
Theorem~\ref{T3.1} and Remark~\ref{R3.1}.
\qed\end{proof}

\begin{remark}
\begin{enumerate}
\item
The observations regarding Theorem \ref{optimality} made in Remark~\ref{R6} are largely applicable to Theorem \ref{optimalityb}.
In particular, assumption \ref{T5.3.1} in Theorem~\ref{optimalityb} can be
rewritten equivalently in the following way:
``\emph{the functions $f_{k}$ $(k=1,\ldots,m)$, $f_{k}^+$ $(k=m+1,\ldots)$  are lower semicontinuous on a \nbh\ of $\bx$, and the sets $\dom f_{k}$ are closed for all $k\in \mathbb{N}\cup\{\infty\}$ such that $\la_k=0$}''.

\if{
\AK{21/07/24.
I have temporarily restored the ``positive part'' functions and hidden computing their subdifferentials to comply with the current ``constructive part'' of Theorem~\ref{tmain} which makes all the multipliers positive, also for non-active constraints.\\
{\color{blue} We have added the estimates on the changes in the values of the functions.}
}}\fi

\item
Theorems \ref{optimality} and \ref{optimalityb} give fuzzy multiplier rules \eqref{T5-2} and \eqref{T7-2}: the subdifferentials are computed at some points $x_0,x_1,\ldots,x_n$ (and also $x_\infty$ in the case of \eqref{T5-2}) close (up to $\eps$) to the solution $\bx$.
At the same time, the corresponding multipliers $\la_0,\la_1,\ldots,\la_n$ (and $\la_\infty$ in the case of \eqref{T5-2}) are determined not in a fuzzy way: they are parts of the solutions of the corresponding dual problems and do not depend on $\eps$.
\item
An alternative way to close the duality gap in convex
infinite programming problems consists of
considering the so-called \emph{characteristic cone}
\begin{equation*}
K:=\cone\co\bigcup\limits_{k\in\N}\epi
f_{k}^{\ast }.
\end{equation*}
associated with the
constraint system $\mathcal{S}:=\{f_{k}(x)\leq 0\;
(k\in\N)\}$.
An extension of the Farkas lemma (see, e.g., \cite[Theorem 8.1.5]{CorHanLop23})
establishes that the inequality $\left\langle a,x\right\rangle \leq \alpha $
is consequence of $\mathcal{S}$ if and only if
$\left( a^{\ast },\alpha \right) \in \cl K$.
If the characteristic cone $K$ is
weak$^*$-closed
(\emph{Farkas-Minkowski property}), then, according to
\cite[Theorem~8.2.12]{CorHanLop23},
there is no
duality gap and strong duality holds. Therefore, if we add to the original
constraint system those affine inequalities $\left\langle a,x\right\rangle
\leq \alpha $ such that $\left( a^{\ast },\alpha \right) \in (\cl %
K)\diagdown K$ we
close the duality gap, but in the dual pair with an
extended system of primal constraints. The advantage of the methodology we
propose in Theorems \ref{optimality} and \ref{optimalityb}
is that we only need to add
the well-characterized function
$f_{\infty }:=\limsup_{k\rightarrow \infty }f_{k}$, and appeal to the functions $f_k^+$ for large $k$, respectively.
\end{enumerate}
\end{remark}

The analysis above is illustrated by the following example.

\begin{example}
\if{
\PM{
``Why not $u_1=x_1$ and $u_2=x_2$?''}
\AK{12/05/25.
Because, $x_1,x_2,\ldots$ are used to denote points in $\R^2$.}
}\fi
We reconsider the problem in Example~\ref{E1.1}.
Observe that the functions $f_k$ $(k=0,1,\ldots)$ and $f:=\sup_{k\in\N}f_k$ are finite, convex and continuous, $f_{\infty}(x):=\limsup_{k\rightarrow\infty}f_k(x)=-u_2$, where $x=(u_1,u_2)\in\R^2$, and Slater condition \eqref{Sl} holds.

With $\lambda=(\la_1,\la_2,\ldots)\in
{\ell^{1}_+}$ and $\la_\infty\in
{\R_+}$, the Lagrangian function \eqref{Lag} takes the form:
\begin{equation*}
L(x,\lambda ,\lambda _{\infty })=\Big( \lambda_{2}+\sum_{k=3}^{+\infty}\frac{\lambda _{k}}{k}\Big) u_{1}+\Big(1-\lambda_1-\sum_{k=3}^{+\infty}\lambda_{k}-\lambda _{\infty }\Big)u_{2}-\lambda_1.
\end{equation*}
(Compare with the function \eqref{E1.1-2} in Example~\ref{E1.1}.)
Hence, $\inf_{x\in X}L(x,\lambda ,\lambda _{\infty })$ equals $-\lambda_1$ if $\lambda_2+\sum_{k=
3}^{+\infty}\frac{\lambda _{k}}{k}=1-\lambda_1-\sum_{k= 3}^{+\infty}\lambda_{k}-\lambda _{\infty}=0$,
{i.e., $\la_k=0$ for $k=2,3,\ldots$ and $\la_1+\la_\infty=1$,}
and $-\infty$ otherwise.
Thus, the corresponding dual problem \eqref{problemd} looks like this:
\begin{equation*}
\begin{array}{ll}
\text{maximize} & -\lambda_1\\
\text{subject to} & \lambda_1+\lambda_{\infty}=1,\\ &\lambda_1\ge0,\;\lambda_{\infty}\ge0.
\end{array}
\end{equation*}
Its solution is obviously $\{0_{\ell ^{1}},1\},$ and the optimal value equals zero, coinciding with that of the primal problem.
This confirms the strong duality
{between problems \eqref{problemp} and \eqref{problemd}} stated in Theorem~\ref{tmain}.

{We now check the strong duality between problems \eqref{problemp} and \eqref{problemdmas}.
Similarly to the above, with $\lambda=(\la_1,\la_2,\ldots)\in\ell^{\infty}_+$ and any $m>2$, the Lagrangian function \eqref{Lag2} takes the form:
\begin{align*}
L_{m}(x,\lambda)=\Big( \lambda_2+\sum_{k=3}^{m}\frac{\lambda_{k}}{k}\Big) u_{1}+\Big(1-\la_1-\sum_{k=3}^{m}\lambda_{k}\Big)u_{2} +\sum_{k=m+1}^{+\infty}\lambda_{k}\Big(\frac1k{u_1}-u_2\Big)^+ -\la_1.
\end{align*}
}
{We are going to show that
\begin{gather}
\label{E4.1-4}
\inf_{x\in X}L_{m}(x,\lambda)=
\begin{cases}
-\lambda_1 & \text{if } \lambda_1=1, \text{ or } \lambda_1<1 \text{ and } \lambda_1 +\sum_{k=j}^{\infty} \lambda_{k}>1 \text{ for all } j>m,\\
-\infty & \text{otherwise.}
\end{cases}
\end{gather}
Then $v(\mathtt{D}_m) =\sup_{0\le\la_1\le1
} (-\la_1)=0$, confirming the strong duality
between problems \eqref{problemp} and
\eqref{problemdmas}.
We consider several cases.}

If $\max\{\lambda_2,\ldots,\lambda_{m}\}>0$, then, setting $x_t:=(-t,0)$ for $t>0$, we have $L_{m}(x_t,\lambda) =-\big(\lambda_2+\sum_{k=3}^{m} \frac{\lambda_{k}}{k}\big)t-\la_1$,
and consequently,
\begin{align*}
\inf_{x\in X}L_{m}(x,\lambda)\le-\lim_{t\to+\infty} \Big(\lambda_2+\sum_{k=3}^{m} \frac{\lambda_{k}}{k}\Big)t-\la_1=-\infty.
\end{align*}
From now on, we assume that $\lambda_2=\ldots=\lambda_{m}=0$.
The Lagrangian function takes a simpler form:
\begin{align*}
L_{m}(x,\lambda)&=(1-\lambda_1)u_{2} +\sum_{k=m+1}^{+\infty} \lambda_{k}\Big(\frac1k{u_1}-u_2\Big)^+-\la_1.
\end{align*}

{If $\la_1>1$, then, setting $x_t:=(0,t)$ for $t>0$, we have $L_{m}(x_t,\lambda)=(1-\lambda_1)t-\la_1$, and consequently,
\begin{align*}
\inf_{x\in X}L_{m}(x,\lambda)\le\lim_{t\to+\infty} (1-\lambda_1)t-\la_1=-\infty.
\end{align*}
}

{If $\la_1=1$, then clearly $\inf_{x\in X}L_{m}(x,\lambda)=-\la_1=-1$.}

{Let $0\le\la_1<1$.
For each $j\in\N$, denote $\bar\la_j:=\sum_{k=j}^{+\infty}\lambda_{k}$, and observe that the sequence of nonnegative (extended) numbers $\bar\la_1,\bar\la_2,\ldots$ is nonincreasing, and consequently, converges to some~$\bar\la$.}

{If $\la_1+\bar\la_j<1$ for some $j>m$, then setting $x_t:=(-(j-1)t,-t)$ for $t>0$, we have
\begin{align*}
L_{m}(x,\lambda)&=(\lambda_1-1)t +\sum_{k=j}^{+\infty} \lambda_{k}\Big(1-\frac{j-1}k\Big)t-\la_1 \le(\la_1+\bar\la_j-1)t-\la_1,
\end{align*}
and consequently,
\begin{align*}
\inf_{x\in X}L_{m}(x,\lambda)\le\lim_{t\to+\infty} (\la_1+\bar\la_j-1)t-\la_1 =-\infty.
\end{align*}
Let $\la_1+\bar\la_j\ge1$ for all $j>m$.
Then $\la_1+\bar\la\ge1$ and $\bar\la>0$.}

{If $\la_1+\bar\la_{j_0}=1$ for some $j_0>m$, then $\la_1+\bar\la_j=1$ for all $j\ge j_0$, and consequently, $\la_j=0$ for all $j\ge j_0$, yielding $\bar\la=\bar\la_{j_0}=0$; a conradiction.
So, this case is impossible.}

{Let $\la_1+\bar\la_j>1$ for all $j>m$.
If $u_2\ge0$, then, obviously, $L_{m}(x,\lambda)\ge-\la_1$.
Let $\al\in(0,1)$.
For any $u_2<0$ and $u_1\in\R$, choose an integer $j>\max\big\{m,\frac{u_1}{u_2(1-\al)}\big\}$.
(Note that the second component can only be useful when $u_1<0$.)
Then $\frac{u_1}{ku_2}<1-\al$ for all $k>j$, and consequently,
\begin{align*}
L_{m}(x,\lambda)&\ge(1-\lambda_1)u_{2} +\sum_{k=j}^{+\infty} \lambda_{k}\Big(\frac{u_1}{ku_2}-1\Big)u_2-\la_1 \\&\ge(1-\lambda_1-\al\bar\lambda_j)u_2-\la_1
\ge(1-\lambda_1-\al\bar\lambda)u_2-\la_1.
\end{align*}
Passing to the limit as $\al\uparrow1$, we obtain $L_{m}(x,\lambda)\ge(1-\lambda_1-\bar\lambda)u_2-\la_1 \ge-\la_1$.
Taking into account that $L_{m}(x,\lambda)=-\la_1$ when $u_1=u_2=0$, we conclude that $\inf_{x\in X}L_{m}(x,\lambda)=-\la_1$}.
This completes the proof of \eqref{E4.1-4}
and, as a consequence, the strong duality
between problems \eqref{problemp} and \eqref{problemdmas}.

For the solution
$\bar{x}:=(0,0)$
{of problem \eqref{problemp}},
we have $f_k(\bx)=0$, $k=0,2,3,\ldots$, and $f_1(\bx)=-1$.
For the
solution $\{(0_{\ell ^{1}},1)\}$
{of problem \eqref{problemd}},
the corresponding family $\{f_0,\ \lambda _{k}f_{k}\ (k\in\N),\ \lambda_\infty f_\infty\}=\{f_0,\ 0,0,\ldots,\ f_\infty\}$
is uniformly lower semicontinuous on a \nbh\ of~$\bx$.
Thus, the conditions of Theorem \ref{optimality}
are satisfied.
The conclusions hold true as well (even with $\eps=0$).
Indeed, one can take $x_0=x_1=\ldots=x_\infty:=\bx$.
The subdifferentials are easily computed:
$\sd f_0(x_0)=\Big(
\begin{array}{c}
0 \\
1
\end{array}
\Big)$,
$\sd f_\infty(x_\infty)=\Big(
\begin{array}{c}
0 \\
-1
\end{array}
\Big)$.
Hence,
$$\Big(\begin{array}{c}
0 \\
0
\end{array}
\Big)\in\sd f_0(x_0)+\sum_{k=1}^n{\lambda}_{k}\partial f_{k}(x_{k})+\bar\la\sd f_\infty(x_\infty).$$

{Theorem \ref{optimalityb} is also applicable in this example.}
\end{example}
\if{
\AH{I would propose to add the following (general) references:\\
N. Dinh, M.A. Goberna, M.A. LÃ³pez, and M. Volle. Relaxed Lagrangian duality in convex infinite optimization: reverse strong duality and optimality. J. Appl. Numer. Optim. 4:3--18, 2022.\\
M.A. Goberna and M. Volle. Duality for convex infinite optimization on linear spaces. Optim. Lett. 16:2501-2510. 2022.\\
A. Hantoute, R. Correa. Integral-type representations for the subdifferential of
suprema, submitted, 2023.}
\AK{12/08/24.
Added.
Check the red pieces on pages 4 and 20.}
\fi
\if{
\begin{remark}
Theorem \ref{optimalityb} is not applicable in the above example.
Indeed, by Theorem \ref{tmain}, since $\{0_{\ell ^{1}},1\}$ is a solution of \eqref{problemd}, the stationary sequence $(2,2,\ldots)\in\ell^\infty$ is a solution of \eqref{problemdmas}.
We can easily see that the collection of functions
\blue{$\{f_0,\ 2 f_k\  (k\leq m),\ 2 f_k^+\  (k\geq m+1)\}$ is not uniformly lower semicontinuous}. Indeed,  for each $n\geq 1$ and $\delta>0$ we have that
\begin{gather*}
\inf_{\rm{diam}(x_0,\ldots,x_n)\leq \delta} \big(f_0(x_0)+2\sum\nolimits_{k=1}^m f_k(x_k)
+2\sum\nolimits_{k=m+1}^n f_k^+(x_k)\big)
\\
\leq \inf_{x\in X} \big(f_0(x)+2\sum\nolimits_{k=1}^m f_k(x)
+2\sum\nolimits_{k=m+1}^n f_k^+(x)\big)
\leq  -1.
\end{gather*}
\AK{24/07/24.
Take $m=2$.
Then $\hat\la=(0,0,1,1,\ldots)$.
$$\liminf\limits_{\substack{\diam(u_{13},\ldots,u_{1n})\to0\\ \diam(u_{20},u_{23}\ldots,u_{2n})\to0,\, (u_{1k},u_{2k})\in U}} \Big(u_{20}+\sum_{k=3}^n(\frac1ku_{1k}-u_{2k})_+\Big) =0$$ when $n$ is large enough.
Here $U$ is a neighborhood of $(0,0)$.
}
\AH{Perhaps I am wrong, but following my understanding of the uniform lower semicontinuity, the last liminf is not larger than $-\delta$, where we take $U:=[-\delta,-\delta]^2$ is any fixed neighborhood of $\bx=(0,0).$ Indeed, for any $u_1$, $u_2\in U$ and $n \geq 3$ we have
\begin{gather*}
\liminf\limits_{\substack{\diam(u_{13},\ldots,u_{1n})\to0\\ \diam(u_{20},u_{23}\ldots,u_{2n})\to0,\, (u_{1k},u_{2k})\in U}} \Big(u_{20}+\sum_{k=3}^n(\frac{1}{k}u_{1k}-u_{2k})_+\Big) \\
\le
\liminf\limits_{\substack{\diam(u_{13},\ldots,u_{1n})\to0,\ u_{13}=\ldots=u_{1n}=u_1\\ \diam(u_{20},u_{23}\ldots,u_{2n})\to0,\, u_{20}=u_{23}=\ldots=u_{2n}=u_2,\ (u_{1k},u_{2k})\in U}} \Big(u_{20}+\sum_{k=3}^n(\frac{1}{k}u_{1k}-u_{2k})_+\Big)\\
=\inf_{(u_1,u_2)\in U}\Big(u_{2}+\sum_{k=3}^n(\frac{1}{k}u_{1}-u_{2})^+\Big)
\\
\leq \inf_{(u_1,u_2)\in U,\ \frac{1}{k}u_{1}-u_{2}\leq 0 \, (3\le k\le n)}\Big(u_{2}+\sum_{k=3}^n(\frac{1}{k}u_{1}-u_{2})^+\Big)\\
= \inf_{(u_1,u_2)\in U,\ \frac{1}{k}u_{1}-u_{2}\leq 0 \, (3\le k\le n)} u_2
\le -\delta\quad (\text{take for instance } u_1=0 \text{ and } u_2=-\delta).
\end{gather*}}

\AK{31/07/24.
In the last estimate, you cannot take $u_2=-\delta$.
It must be $u_2\ge u_1/n\ge-\de/n$, and then you need to let $n\to+\infty$.}
\if{
\AK{24/07/24.
This seems to illustrate a gap in our theory around Theorem \ref{optimalityb}.
$-1$ above is caused by the non-active constraint with $k=2$.
Something should be said about the choice of $m$.
The cases $m=1$ and $m>1$ seem to be different.
In any case, I would expect to have $\hat\la_2=0$.
This would make the above infimum equal 0.
Can we make Theorem \ref{optimalityb} work in this example?
This would add some value to our theory.}
}\fi
{\color{olive}So,
\begin{gather*}
\liminf_{n\to\infty}\liminf_{\rm{diam}(x_0,\ldots,x_n)\to 0} \big(f_0(x_0)+2\sum\nolimits_{k=1}^m f_k(x_k)
+2\sum\nolimits_{k=m+1}^n f_k^+(x_k)\big)\\
\leq -1<0=v(\mathtt{P}).
\end{gather*}
 Despite this, the conclusions of Theorem \ref{optimalityb} is satisfied too, because the uniform lower semicontinuity is not a necessary condition.} For instance,
for any $\eps>0$ and $n>3$, one can take
$x_0=x_1=x_2=
\Big(\begin{array}{c}
0 \\
0
\end{array}
\Big)$,
$\la_1=\la_2=0$,
$x_k=
\Big(\begin{array}{c}
0 \\
-\eps/2
\end{array}
\Big)$ and $\la_k=\frac1{n-2}$,
$k=3,\ldots,n$.
Then
$$
\sd f_0(x_0)+ \sum\nolimits_{k=3}^n\lambda_{k}\partial f_{k}(x_{k})=
\Big(\begin{array}{c}
0 \\
1
\end{array}
\Big)
+\frac1{n-2}\sum\nolimits_{k=3}^n
\Big(\begin{array}{c}
\frac1k \\
-1
\end{array}
\Big)
=
\Big(\begin{array}{c}
\frac1{n-2}\sum\nolimits_{k=3}^n\frac1k \\
0
\end{array}
\Big)
$$ and $\frac1{n-2}\sum\nolimits_{k=3}^n\frac1k<\eps$  if $n$ is large enough.
\sloppy
\end{remark}
}\fi
\if{
\AK{20/07/24.
It seems to me that the collection IS uniformly lower semicontinuous.
The conclusions of Theorem \ref{optimalityb} seem to be satisfied too.
For any $\eps>0$ and $n>3$, one can take
$x_0=x_1=x_2=
\Big(\begin{array}{c}
0 \\
0
\end{array}
\Big)$,
$\la_1=\la_2=0$,
$x_k=
\Big(\begin{array}{c}
0 \\
-\eps/2
\end{array}
\Big)$ and $\la_k=\frac1{n-2}$,
$k=3,\ldots,n$.
Then $\sd f(x_0)+ \sum\nolimits_{k=3}^n\lambda_{k}\partial f_{k}(x_{k})=
\Big(\begin{array}{c}
0 \\
1
\end{array}
\Big)
+\frac1{n-2}\sum\nolimits_{k=3}^n
\Big(\begin{array}{c}
\frac1k \\
-1
\end{array}
\Big)
=
\Big(\begin{array}{c}
\frac1{n-2}\sum\nolimits_{k=3}^n\frac1k \\
0
\end{array}
\Big)
$ and $\frac1{n-2}\sum\nolimits_{k=3}^n\frac1k<\eps$  if $n$ is large enough.
Am I missing something?
}
}\fi

\section{Conclusions}\label{sec:conclusions}

In this work, we have explored new duality perspectives for infinite optimization problems with infinitely many constraints in a Banach space. Classical duality schemes based on Haar duality often fail to ensure strong duality. To address this issue, we have 
studied broader Lagrangian-type dual formulations that incorporate the concept of infinite sum for arbitrary collections of functions introduced in \cite{HanKruLop1}. We have proposed two new dual problems: the first one, denoted (\ref{problemd}), is constructed directly from the original data of problem (\ref{problemp}); the second involves a Lagrangian that includes both the infinite sum of the constraint functions and a limit function, reflecting the role of compactification techniques used in our approach. 
We have shown that under the standard Slater condition these dual formulations are better suited to guarantee zero duality gap and strong duality (existence of dual solutions). Furthermore, we have established general optimality conditions (in the form of multiplier rules) for problem (\ref{problemp}) by applying a subdifferential calculus rule from \cite{HanKruLop1}.



\section*{Acknowledgements}

The authors have benefited from fruitful discussions with Pedro P\'erez-Aros
and wish to express him their gratitude.

This work was supported by Grant PID2022-136399NB-C21 funded by MICIU/AEI/10.13039/501100011033 and by ERDF/EU, MICIU of Spain and Universidad de Alicante (Contract Beatriz Galindo BEA-GAL 18/00205), and by AICO/2021/165 of Generalitat Valenciana, and by
Programa Propio Universidad de Alicante (INVB24-01 - Ayudas para Estancias de Personal Investigador Invitado 2025).

Parts of the work were done during Alexander Kruger's stays at the University of Alicante and the Vietnam Institute for Advanced Study in Mathematics in Hanoi.
He is grateful to both institutions for their hospitality and supportive environment.

\section*{Declarations}

\noindent{\bf Data availability. }
Data sharing is not applicable to this article as no datasets have been generated or analysed during the current study.

\noindent{\bf Conflict of interest.} The authors have no competing interests to declare that are relevant to the content of this article.

\addcontentsline{toc}{section}{References}

\bibliographystyle{spmpsci}
\bibliography{buch-kr,kruger,kr-tmp}

\def\cprime{$'$} \def\cftil#1{\ifmmode\setbox7\hbox{$\accent"5E#1$}\else
  \setbox7\hbox{\accent"5E#1}\penalty 10000\relax\fi\raise 1\ht7
  \hbox{\lower1.15ex\hbox to 1\wd7{\hss\accent"7E\hss}}\penalty 10000
  \hskip-1\wd7\penalty 10000\box7} \def\cprime{$'$} \def\cprime{$'$}
  \def\cprime{$'$} \def\cprime{$'$} \def\cprime{$'$}
  \def\Dbar{\leavevmode\lower.6ex\hbox to 0pt{\hskip-.23ex \accent"16\hss}D}
  \def\cfac#1{\ifmmode\setbox7\hbox{$\accent"5E#1$}\else
  \setbox7\hbox{\accent"5E#1}\penalty 10000\relax\fi\raise 1\ht7
  \hbox{\lower1.15ex\hbox to 1\wd7{\hss\accent"13\hss}}\penalty 10000
  \hskip-1\wd7\penalty 10000\box7} \def\cprime{$'$}
\begin{thebibliography}{10}
\providecommand{\url}[1]{{#1}}
\providecommand{\urlprefix}{URL }
\expandafter\ifx\csname urlstyle\endcsname\relax
  \providecommand{\doi}[1]{DOI~\discretionary{}{}{}#1}\else
  \providecommand{\doi}{DOI~\discretionary{}{}{}\begingroup
  \urlstyle{rm}\Url}\fi

\bibitem{AndNas87}
Anderson, E.J., Nash, P.: Linear Programming in Infinite-Dimensional Spaces.
\newblock John Wiley \& Sons, Ltd., Chichester (1987)

\bibitem{Bor83}
Borwein, J.M.: Semi-infinite programming duality: how special is it?
\newblock In: A.V. Fiacco, K.O. Kortanek (eds.) Semi-Infinite Programming and
  Applications ({A}ustin, {T}ex., 1981), \emph{Lecture Notes in Econom. and
  Math. Systems}, vol. 215, pp. 10--36. Springer, Berlin-New York (1983)

\bibitem{BorIof96}
Borwein, J.M., Ioffe, A.: Proximal analysis in smooth spaces.
\newblock Set-Valued Anal. \textbf{4}(1), 1--24 (1996).
\newblock \doi{10.1007/BF00419371}

\bibitem{BorZhu96}
Borwein, J.M., Zhu, Q.J.: Viscosity solutions and viscosity subderivatives in
  smooth {B}anach spaces with applications to metric regularity.
\newblock SIAM J. Contr. Optim. \textbf{34}, 1568--1591 (1996)

\bibitem{BorZhu05}
Borwein, J.M., Zhu, Q.J.: Techniques of Variational Analysis.
\newblock Springer, New York (2005)

\bibitem{CorHan24}
Correa, R., Hantoute, A.: Integral-type representations for the subdifferential
  of suprema.
\newblock J. Optim. Theory Appl. \textbf{203}(2), 1659--1685 (2024).
\newblock \doi{10.1007/s10957-024-02545-9}

\bibitem{CorHanLop23}
Correa, R., Hantoute, A., L\'{o}pez, M.A.: Fundamentals of Convex Analysis and
  Optimization---a Supremum Function Approach.
\newblock Springer Series in Operations Research and Financial Engineering.
  Springer, Cham (2023).
\newblock \doi{10.1007/978-3-031-29551-5}

\bibitem{DinGobLopVol23}
Dinh, N., Goberna, M.A., L{\'o}pez, M.A., Volle, M.: Relaxed {L}agrangian
  duality in convex infinite optimization: reducibility and strong duality.
\newblock Optimization \textbf{72}(1), 189--214 (2023).
\newblock \doi{10.1080/02331934.2022.2031192}

\bibitem{Duf56}
Duffin, R.J.: Infinite programs.
\newblock In: Linear inequalities and related systems, \emph{Ann. of Math.
  Stud.}, vol. no. 38, pp. 157--170. Princeton Univ. Press, Princeton, NJ
  (1956)

\bibitem{FabKruMeh24}
Fabian, M.J., Kruger, A.Y., Mehlitz, P.: Fuzzy multiplier, sum and intersection
  rules in non-{L}ipschitzian settings: Decoupling approach revisited.
\newblock J. Math. Anal. Appl. \textbf{532}(2), 127,985 (2024).
\newblock \doi{10.1016/j.jmaa.2023.127985}

\bibitem{GobLop98}
Goberna, M.A., L{\'o}pez, M.A.: Linear Semi-Infinite Optimization.
\newblock John Wiley \& Sons Ltd., Chichester (1998)

\bibitem{GobVol22}
Goberna, M.A., Volle, M.: Duality for convex infinite optimization on linear
  spaces.
\newblock Optim. Lett. \textbf{16}(8), 2501--2510 (2022).
\newblock \doi{10.1007/s11590-022-01865-x}

\bibitem{HanKruLop1}
Hantoute, A., Kruger, A.Y., L\'opez, M.A.: Optimality conditions and
  subdifferential calculus for infinite sums of functions.
\newblock Preprint, arXiv: \textbf{2409.00573} (2025)

\bibitem{HirPhe93}
Hiriart-Urruty, J.B., Phelps, R.R.: Subdifferential calculus using
  {$\epsilon$}-subdifferentials.
\newblock J. Funct. Anal. \textbf{118}(1), 154--166 (1993).
\newblock \doi{10.1006/jfan.1993.1141}

\bibitem{Kar83}
Karney, D.F.: A duality theorem for semi-infinite convex programs and their
  finite subprograms.
\newblock Math. Programming \textbf{27}(1), 75--82 (1983).
\newblock \doi{10.1007/BF02591965}

\bibitem{Kre61}
Kretschmer, K.S.: Programmes in paired spaces.
\newblock Canadian J. Math. \textbf{13}, 221--238 (1961).
\newblock \doi{10.4153/CJM-1961-019-2}

\bibitem{Las01}
Lassonde, M.: First-order rules for nonsmooth constrained optimization.
\newblock Nonlinear Anal., Ser. A: Theory Methods \textbf{44}(8), 1031--1056
  (2001).
\newblock \doi{10.1016/S0362-546X(99)00321-1}

\bibitem{Mun00}
Munkres, J.R.: Topology, second edn.
\newblock Prentice Hall, Inc., Upper Saddle River, NJ (2000)

\bibitem{Pen96}
Penot, J.P.: Subdifferential calculus without qualification assumptions.
\newblock J. Convex Anal. \textbf{3}(2), 207--219 (1996)

\bibitem{Pen13}
Penot, J.P.: Calculus Without Derivatives, \emph{Graduate Texts in
  Mathematics}, vol. 266.
\newblock Springer, New York (2013).
\newblock \doi{10.1007/978-1-4614-4538-8}

\bibitem{Thi97}
Thibault, L.: Sequential convex subdifferential calculus and sequential
  {L}agrange multipliers.
\newblock SIAM J. Control Optim. \textbf{35}(4), 1434--1444 (1997).
\newblock \doi{10.1137/S0363012995287714}

\bibitem{Zal02}
Z{\u{a}}linescu, C.: Convex Analysis in General Vector Spaces.
\newblock World Scientific Publishing Co. Inc., River Edge, NJ (2002).
\newblock \doi{10.1142/5021}

\end{thebibliography}

\end{document}